\theoremstyle{definition}
\newtheorem{theorem}{Theorem}[section]
\newtheorem{proposition}[theorem]{Proposition}
\theoremstyle{definition}
\theoremstyle{definition}
\newtheorem{example}[theorem]{Example}
\begin{document}
	\baselineskip=17pt
	\title[]{Free Circle Actions on The Product of Three Spheres}
	\author[Hemant Kumar Singh and Dimpi]{Hemant Kumar Singh and Dimpi}
		\address{  Hemant Kumar Singh\newline\indent 
		Department of Mathematics\newline\indent University of Delhi\newline\indent 
		Delhi -- 110007, India.}
	\email{hemantksingh@maths.du.ac.in}
	\address{ Dimpi \newline 
		\indent Department of Mathematics\indent \newline\indent University of Delhi\newline\indent 
		Delhi -- 110007, India.}
	\email{dimpipaul2@gmail.com}
	
	\date{}
	\thanks{The first author of the paper is  supported by Faculty Research Program of the IoE Scheme of the University of Delhi with reference no. Ref.No./IoE/2024-2025/12/FRP}
	\begin{abstract} 
		
		\noindent The orbit spaces of free $\mathbb{S}^0$-actions on the mod 2 cohomology product of three spheres, $\mathbb{S}^n\times \mathbb{S}^m \times \mathbb{S}^l, 1 \leq n \leq m \leq l,$ have been determined in \cite{Dimpi}.  In this paper, we extend these findings  to free $\mathbb{S}^1$-actions on the rational cohomology product of three spheres. This extension also builds upon the work of Dotzel et al. \cite{Dotzel}, who studied free circle actions on the rational cohomology  product of two spheres. Additionally, we establish Borsuk-Ulam type theorems. 
		
	\end{abstract}
	\subjclass[2010]{Primary 57S17; Secondary 57S25}
	
	\keywords{Free action; Finitistic space; Leray-Serre spectral sequence; Orbit spaces.}

	\maketitle
	\section {Introduction}
	Let $G$ be a compact Lie group acting on a finitistic space $X.$ The determination of the orbit space $X/G,$ particularly when $G$ acts freely on $X,$ has drawn widespread interest from mathematicians.  Firstly, in $1925$-$26,$ H. Hopf raised the question to determine the orbit spaces $\mathbb{S}^n/G,$ where $G$ is a finite cyclic group. Later, in 1964, Hirsch et al. established that the orbit space $\mathbb{S}^n/\mathbb{Z}_2$ is homotopy type of $\mathbb{R}P^n,$ where $\mathbb{Z}_2$ acts freely on $\mathbb{S}^n.$  Further, the orbit spaces of finite group actions on $n$-sphere $\mathbb{S}^n$ have been studied in \cite{livesay,rice,rubin,Ritter}.  However, very little is known if the space is a compact manifold other than a sphere. Tao \cite{tao} determined
	the orbit spaces of free involutions on $\mathbb{S}^1 \times  \mathbb{S}^2$. Ritter \cite{Ritter} extended the Tao’s results
	to cyclic groups of order $2n.$ In 1972, Ozeki et al. \cite{ozeki} determined the orbit space of a
	free circle action on a manifold having cohomology of product of spheres $\mathbb{S}^{2n+1}\times \mathbb{S}^{2n+1}$ with integer coefficients. The orbit spaces
	of free actions of $G=\mathbb{Z}_p, p$ a prime, or $G=\mathbb{S}^d,d=1$ or $3,$ on a finitistic space $X$ having mod $p$ or rational cohomology of  product of two spheres $\mathbb{S}^n \times \mathbb{S}^m,1\leq n\leq m,$ have been studied in \cite{Dotzel} and \cite{anju}. Dey et al. \cite{dey}, Morita et al. \cite{morita}, and  Singh \cite{singh} discussed the orbit spaces of free involutions on real Milnor manifolds, Dold manifolds,
	and the product of two projective spaces, respectively. Recently, in \cite{Dimpi}, we have discussed the orbit space  of free involutions on a finitistic space having mod $2$ cohomology of the product of three spheres $\mathbb{S}^n \times  \mathbb{S}^m \times \mathbb{S}^l,$ $1\leq n  \leq m \leq l .$ 
	
		Expanding upon this area of research, this paper investigates the orbit spaces  of free circle actions on a finitistic space having rational  cohomology of the product of three spheres $\mathbb{S}^n \times  \mathbb{S}^m \times \mathbb{S}^l,$ $1\leq n  \leq m \leq l .$ For  instance, consider  the complex Stiefel manifold $V_{n,n-3}$ whose integral cohomology is isomorphic  to the product of three spheres $\mathbb{S}^{2n-9}\times \mathbb{S}^{2n-7}\times\mathbb{S}^{2n-5},$ for all $n\geq 5$ \cite{adam}. This complex Stiefel manifold $V_{n,n-3}$  admits a free circle action  defined by $(v_1,v_2,\cdots, v_{n-3})\mapsto (\lambda v_1,\lambda v_2,\cdots, \lambda v_{n-3}), $ where $\lambda \in \mathbb{S}^1$ and $v_i's, 1\leq i \leq n-3,$ are orthonormal vectors in $\mathbb{C}^n.$ If we consider diagonal actions of $G=\mathbb{S}^1$ on $SU(3)\times \mathbb{S}^{2l+1}$ and $U(2)\times \mathbb{S}^{2l+1},$ where $G=\mathbb{S}^1$ acts on $\mathbb{S}^{2l+1}$ defined by $(z_1,z_2,\cdots, z_{l+1})\mapsto (\lambda z_1,\lambda z_2,\cdots, \lambda z_{l+1}), $ where $\lambda \in \mathbb{S}^1,$  while acting trivially on $SU(3)$ and $U(2).$ This gives  free circle action  on  $SU(3)\times \mathbb{S}^{2l+1}$ and $U(2)\times \mathbb{S}^{2l+1},$  having integral cohomologies isomorphic to $\mathbb{S}^3 \times  \mathbb{S}^5 \times \mathbb{S}^{2l+1}$ and $\mathbb{S}^1 \times  \mathbb{S}^3 \times \mathbb{S}^{2l+1},$ respectively \cite{borel}. 	As an application, we have also determined the  Borsuk-Ulam type results.
			\section{Preliminaries}  
			\noindent	In this section, we recall some basic key results that are used throughout the paper.
		Let $G$ be a compact Lie group  acting on a finitistic space $X.$  Then there exists a universal principal $G$-bundle $G\hookrightarrow E_G\rightarrow B_G.$ If $X$ is free $G$-space then  the associated Borel fibration is $ X \stackrel{i} \hookrightarrow X_G \stackrel{\pi} \rightarrow B_G,$ where $X_G = (X\times E_G)/G$ (Borel space) obtained by diagonal action of $G$ on space $X\times E_G.$  We recall some results of the Leray-Serre spectral sequence associated with Borel fibration $ X \stackrel{i} \hookrightarrow X_G \stackrel{\pi} \rightarrow B_G.$ For proofs, we refer \cite{bredon,mac}.
			
			\begin{proposition}(\cite{mac})
				Suppose that  the system of local coefficients on $B_G$ is simple.	Then the  homomorphisms $i^*: H^*(X_G) \rightarrow H^*(X)$ and $\pi^*: H^*(B_G) \rightarrow H^*(X_G)$ are the edge homomorphisms, \begin{center}
					$ H^k(B_G)=E_2^{k,0}\rightarrow E_3^{k,0}\rightarrow \cdots E_k^{k,0}\rightarrow E_{k+1}^{k,0} = E_{\infty}^{k,0} \subset H^k(X_G),$ and $  H^i(X_G) \rightarrow E_{\infty}^{0,i} \hookrightarrow E_{l+1}^{0,i} \hookrightarrow E_{l}^{0,i} \hookrightarrow \cdots \hookrightarrow E_{3}^{0,i} \hookrightarrow E_2^{0,i} \hookrightarrow H^i(X),$ respectively.
				\end{center} 
			\end{proposition}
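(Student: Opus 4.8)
The plan is to read the statement off the construction of the Leray--Serre spectral sequence of the Borel fibration together with its functoriality in the fibration, using two ingredients: (i) at the two edge positions half of the differentials vanish for dimensional reasons, which produces the stated chains of surjections and injections, and (ii) a comparison with the (collapsing) spectral sequences of the trivial fibrations $\mathrm{pt}\hookrightarrow B_G\xrightarrow{\mathrm{id}}B_G$ and $X\hookrightarrow X\to\mathrm{pt}$ pins the resulting composites down as $\pi^{*}$ and $i^{*}$.

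First I would recall that, since the local coefficient system is simple and $B_G$ and $X$ are connected, $E_2^{p,q}\cong H^p(B_G;H^q(X))$ with constant coefficients, so $E_2^{k,0}\cong H^k(B_G)$ and $E_2^{0,i}\cong H^i(X)$; and that the abutment carries a decreasing filtration $0=F^{n+1}H^n(X_G)\subseteq F^nH^n(X_G)\subseteq\cdots\subseteq F^0H^n(X_G)=H^n(X_G)$ with $F^pH^n(X_G)/F^{p+1}H^n(X_G)\cong E_\infty^{p,n-p}$. Along the base edge, the outgoing differential $d_r\colon E_r^{k,0}\to E_r^{k+r,1-r}$ has zero target for all $r\geq2$, so each $E_{r+1}^{k,0}$ is a quotient of $E_r^{k,0}$, and the incoming $d_r$ has zero source once $r\geq k+1$; this yields the surjections $H^k(B_G)=E_2^{k,0}\rightarrow E_3^{k,0}\rightarrow\cdots\rightarrow E_{k+1}^{k,0}=E_\infty^{k,0}$, while $F^{k+1}H^k(X_G)=0$ identifies $E_\infty^{k,0}$ with the submodule $F^kH^k(X_G)\subseteq H^k(X_G)$. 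Dually, along the fiber edge the incoming differential $d_r\colon E_r^{-r,i+r-1}\to E_r^{0,i}$ always has zero source, so each $E_{r+1}^{0,i}$ is a submodule of $E_r^{0,i}$, giving $E_\infty^{0,i}\hookrightarrow\cdots\hookrightarrow E_3^{0,i}\hookrightarrow E_2^{0,i}\cong H^i(X)$, while $E_\infty^{0,i}=H^i(X_G)/F^1H^i(X_G)$ is a quotient of $H^i(X_G)$. Composing, one obtains canonical homomorphisms $H^k(B_G)\to H^k(X_G)$ and $H^i(X_G)\to H^i(X)$, which I claim are $\pi^{*}$ and $i^{*}$.

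For the last identification I would use naturality of the spectral sequence and of its edge homomorphisms. The pair $(\pi,\mathrm{id}_{B_G})$ is a morphism of fibrations $(X\hookrightarrow X_G\xrightarrow{\pi}B_G)\to(\mathrm{pt}\hookrightarrow B_G\xrightarrow{\mathrm{id}}B_G)$ and hence induces a morphism of spectral sequences; the target one collapses at $E_2$ onto the row $q=0$, its base edge homomorphism is $\mathrm{id}_{H^{*}(B_G)}$, and the induced map on $E_2^{*,0}$ is the identity of $H^{*}(B_G)$ (the fibre $H^0$'s being the coefficient ring and the system simple), so the resulting commuting square forces the base edge homomorphism of the Borel fibration to equal $\pi^{*}$. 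Symmetrically, $(i,\mathrm{const})$ is a morphism of fibrations $(X\hookrightarrow X\to\mathrm{pt})\to(X\hookrightarrow X_G\xrightarrow{\pi}B_G)$ whose source spectral sequence collapses onto the column $p=0$ with fiber edge homomorphism $\mathrm{id}_{H^{*}(X)}$, and whose induced map on $E_2^{0,*}$ is the restriction $H^0(B_G;H^{*}(X))\to H^0(\mathrm{pt};H^{*}(X))$, an isomorphism onto $H^{*}(X)$ since $B_G$ is connected; naturality of the fiber edge homomorphism then identifies it with $i^{*}$. The step needing the most care is this last bookkeeping --- keeping track of the contravariance of the induced morphisms, verifying that the $E_2$-level comparison maps genuinely are isomorphisms under the simple-system hypothesis, and that the identifications $E_\infty^{k,0}\cong F^kH^k(X_G)$ and $E_\infty^{0,i}\cong H^i(X_G)/F^1H^i(X_G)$ are exactly the ones coming out of the construction; the vanishing arguments in the second paragraph are routine.
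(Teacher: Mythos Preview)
Your argument is correct and is precisely the standard derivation of the edge homomorphisms: dimensional vanishing of half the differentials at the two edges yields the chains of surjections and injections, the filtration identifications place $E_\infty^{k,0}$ as a submodule and $E_\infty^{0,i}$ as a quotient of $H^*(X_G)$, and naturality with respect to the two auxiliary trivial fibrations pins the composites down as $\pi^*$ and $i^*$.

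There is, however, nothing to compare against: the paper does not supply a proof of this proposition. It is quoted from McCleary's text (the citation \cite{mac} in the statement), and the authors explicitly write ``For proofs, we refer \cite{bredon,mac}.'' What you have written is essentially the argument one finds in that reference, so your proposal is not a different route but rather a fleshing-out of the cited source.
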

			
			\begin{proposition}(\cite{mac})\label{2.4}
				Let $G=\mathbb{S}^1$ act on a finitistic space $X$ and $\pi_1(B _G)$ acts trivially on $H^*(X).$ Then the system of local coefficients on $B_G$ is simple and $$E^{k,i}_2= H^{k}(B_G) \otimes  H^i(X), ~k,i \geq 0.$$
			\end{proposition}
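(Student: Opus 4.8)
The plan is to deduce both assertions from the single fact that the classifying space $B_{\mathbb{S}^1}$ has the homotopy type of the infinite complex projective space $\mathbb{CP}^\infty$. First I would note that $B_G = B_{\mathbb{S}^1} \simeq \mathbb{CP}^\infty$ is simply connected, so $\pi_1(B_G) = 0$; in particular the standing hypothesis that $\pi_1(B_G)$ act trivially on $H^*(X)$ holds automatically. It follows that the system of local coefficients $\{H^i(X)\}$ on $B_G$ determined by the Borel fibration $X \stackrel{i}{\hookrightarrow} X_G \stackrel{\pi}{\to} B_G$ is simple (untwisted). Hence, by the standard description of the $E_2$-page of the Leray--Serre spectral sequence of a fibration with simple coefficients (see \cite{mac}),
\[
E_2^{k,i} \;=\; H^k\!\left(B_G;\, H^i(X)\right),
\]
the ordinary cohomology of $B_G$ with coefficients in the module $H^i(X)$.

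Next I would apply the universal coefficient theorem in the form
\[
H^k\!\left(B_G;\, H^i(X)\right) \;\cong\; \left(H^k(B_G)\otimes H^i(X)\right)\,\oplus\,\mathrm{Tor}\!\left(H^{k+1}(B_G),\, H^i(X)\right).
\]
Since $H^*(B_G)\cong H^*(\mathbb{CP}^\infty)$ is the polynomial ring on a single generator of degree two, it is free in every degree, so the $\mathrm{Tor}$-term vanishes and $E_2^{k,i}\cong H^k(B_G)\otimes H^i(X)$ for all $k,i\ge 0$. When everything is taken with field (e.g.\ rational) coefficients, as in the setting used throughout the rest of the paper, this last step is automatic since every module involved is free.

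There is no genuine obstacle here: the statement merely repackages the simple connectivity of $\mathbb{CP}^\infty$ together with the freeness of its cohomology, and is recorded for convenient reference in the spectral-sequence computations to follow. The only mild care needed is in quoting the correct form of the $E_2$-term — cohomology of the base with coefficients in the local system $\{H^i(X)\}$ — and in confirming that these coefficients are untwisted; both points are classical and are covered by the references \cite{bredon,mac} cited above.
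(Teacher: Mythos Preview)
Your argument is correct and is precisely the standard justification for this fact. Note, however, that the paper does not supply its own proof of Proposition~\ref{2.4}: it is simply quoted from \cite{mac} as a known result, so there is no ``paper's proof'' to compare against. Your write-up is exactly the kind of verification one would give if asked to unpack the citation --- the key points being that $B_{\mathbb{S}^1}\simeq\mathbb{CP}^\infty$ is simply connected (so the hypothesis on $\pi_1(B_G)$ is vacuous and the local system is automatically simple) and that the cohomology of $\mathbb{CP}^\infty$ is free in each degree (so the universal coefficient theorem collapses $H^k(B_G;H^i(X))$ to the tensor product). Over $\mathbb{Q}$, as used throughout the paper, the latter point is immediate.
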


			\begin{proposition}(\cite{mac})\label{mac}
				Let $G=\mathbb{S}^1$   act freely on a finitistic space $X.$ Then the Borel space $X_G$ is homotopy equivalent to the orbit space $X/G.$
			\end{proposition}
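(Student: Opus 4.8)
\textit{Proof proposal.} The plan is to exhibit the projection $X_G\to X/G$ as a fibre bundle with contractible fibre and then to conclude via Dold's theorem on fibre homotopy equivalences. As a first step I would observe that, since $G=\mathbb{S}^1$ is a compact Lie group acting freely on the finitistic — hence paracompact Hausdorff, hence completely regular — space $X$, the slice theorem applies, so the orbit map $\rho\colon X\to X/G$ is a locally trivial principal $G$-bundle (see \cite{bredon}); moreover $X/G$ is paracompact Hausdorff because $G$ is compact, so every open cover of $X/G$ admits a subordinate partition of unity, i.e. is numerable.

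Next I would identify $X_G$ with the bundle over $X/G$ associated to $\rho$ via the $G$-space $E_G$. The first projection $\mathrm{pr}_1\colon X\times E_G\to X$ is $G$-equivariant for the diagonal action on the source, hence induces a continuous map $p\colon X_G=(X\times E_G)/G\to X/G$ with $p([(x,e)])=\rho(x)$. Over a trivialising open set $U\subseteq X/G$, where $\rho^{-1}(U)\cong U\times G$ with $G$ acting by right translation on the second factor, one computes $p^{-1}(U)\cong (U\times G\times E_G)/G\cong U\times E_G$, under which $p$ becomes the first projection $U\times E_G\to U$. Thus $p$ is a locally trivial bundle with fibre $E_G$, numerably trivialised since $X/G$ is paracompact. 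Because $E_G$ is contractible, the restriction of $p$ over each such $U$ is a homotopy equivalence over $U$ (a section is given by a basepoint of $E_G$, and a contraction of $E_G$ supplies the required fibrewise homotopy), so by Dold's theorem — a map over a paracompact base which is a homotopy equivalence over each member of a numerable cover is itself a homotopy equivalence — the map $p\colon X_G\to X/G$ is a homotopy equivalence, which is precisely the assertion.

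The only step that is not purely formal is the first one: knowing that a free action of a compact Lie group on a finitistic space is a genuine locally trivial principal bundle, which rests on the slice theorem and Gleason's averaging argument. Once that structure is in hand, the contractibility of $E_G$ together with Dold's theorem closes the argument with no computation, so I expect the write-up to be short.
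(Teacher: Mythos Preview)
Your argument is correct and follows the standard route to this well-known fact. Note, however, that the paper does not supply its own proof of this proposition: it is simply quoted from \cite{mac} (McCleary) as background, with no argument given. So there is nothing in the paper to compare your proof against.

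For completeness: your use of ``finitistic $\Rightarrow$ paracompact Hausdorff'' relies on Bredon's convention (finitistic spaces are by definition paracompact Hausdorff with the finite-dimensional-refinement property), which is indeed the convention in force here since the paper cites \cite{bredon}. With that in hand, the slice theorem applies, $X\to X/G$ is a principal $G$-bundle, $p\colon X_G\to X/G$ is the associated $E_G$-bundle, and contractibility of $E_G$ together with Dold's theorem (or, more elementarily, the long exact homotopy sequence plus Whitehead when $X/G$ has the homotopy type of a CW complex) gives the homotopy equivalence. Your write-up is sound.
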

			\begin{proposition}(\cite{bredon})\label{prop 4.5}
				Let $G=\mathbb{S}^1$   act freely on a finitistic space $X.$ If $H^i(X;\mathbb{Q})=0~ \forall~ i>n,$ then $H^i(X/G;\mathbb{Q})=0~ \forall~ i>n.$ 
			\end{proposition}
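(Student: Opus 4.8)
The plan is to pass to the orbit space $X/G$ and then play the rational Euler class of the principal circle bundle $\mathbb{S}^1\hookrightarrow X\stackrel{p}{\to}X/G$ off against the cohomological hypothesis on $X$. Since $\mathbb{S}^1$ is a compact Lie group acting freely on the finitistic (hence paracompact Hausdorff) space $X$, the orbit map $p$ is a locally trivial principal $\mathbb{S}^1$-bundle and $X/G$ is again finitistic; as the structure group $\mathbb{S}^1$ is connected, $p$ is orientable and has a rational Euler class $e\in H^2(X/G;\mathbb{Q})$. By Proposition~\ref{mac}, $H^*(X/G;\mathbb{Q})\cong H^*(X_G;\mathbb{Q})=H^*_{\mathbb{S}^1}(X;\mathbb{Q})$, and under this identification $e=\pi^*(t)$, where $t\in H^2(B_G;\mathbb{Q})$ is the polynomial generator. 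So it suffices to prove $H^i(X/G;\mathbb{Q})=0$ for $i>n$.

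Next I would run the Gysin sequence of $p$ — equivalently, the Leray--Serre spectral sequence of $\mathbb{S}^1\hookrightarrow X\stackrel{p}{\to}X/G$, which is concentrated in the two rows $0$ and $1$ and whose only nonzero differential is $\cup e$; thus one has
\[
\cdots\to H^{i}(X;\mathbb{Q})\xrightarrow{\ p_{!}\ }H^{i-1}(X/G;\mathbb{Q})\xrightarrow{\ \cup e\ }H^{i+1}(X/G;\mathbb{Q})\xrightarrow{\ p^{*}\ }H^{i+1}(X;\mathbb{Q})\to\cdots
\]
For $m\ge n$ we have $H^{m+1}(X;\mathbb{Q})=H^{m+2}(X;\mathbb{Q})=0$, so the relevant segments of this exact sequence force $\cup e\colon H^{m}(X/G;\mathbb{Q})\to H^{m+2}(X/G;\mathbb{Q})$ to be injective (because $H^{m+1}(X;\mathbb{Q})=0$) and surjective (because $H^{m+2}(X;\mathbb{Q})=0$); hence $\cup e$ is an isomorphism in all degrees $\ge n$.

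The crucial and least formal step is to show that $e$ is nilpotent, and this is where finitisticity genuinely enters. Because the action is free, $X^{\mathbb{S}^1}=\varnothing$, so the Borel localization theorem for $\mathbb{S}^1$-actions on finitistic spaces (\cite{bredon}) tells us that $H^*_{\mathbb{S}^1}(X;\mathbb{Q})$ is a torsion module over $H^*(B_G;\mathbb{Q})=\mathbb{Q}[t]$; applied to $1\in H^0$ this yields $t^N=0$ in $H^*_{\mathbb{S}^1}(X;\mathbb{Q})$ for some $N\ge1$, that is, $e^N=0$ in $H^*(X/G;\mathbb{Q})$. Combining the two facts finishes the proof: for $m\ge n$ the map $\cup e^N\colon H^m(X/G;\mathbb{Q})\to H^{m+2N}(X/G;\mathbb{Q})$ is a composite of the isomorphisms $\cup e$ (all in source degrees $\ge n$), hence an isomorphism, yet it equals multiplication by $e^N=0$; therefore $H^m(X/G;\mathbb{Q})=0$ for every $m\ge n$, in particular $H^i(X/G;\mathbb{Q})=0$ for every $i>n$. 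The main obstacle is thus precisely the nilpotence of $e$: for an arbitrary finitistic $X$ with rational cohomology vanishing above degree $n$ one cannot reach this by a naive finiteness count — without the localization theorem the orbit space could in principle support a non-nilpotent class in $H^2(X/G;\mathbb{Q})$ — so it is the localization theorem, not mere size considerations, that carries the result.
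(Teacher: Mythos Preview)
The paper does not prove this proposition; it is simply quoted from Bredon \cite{bredon} as a preliminary fact, so there is no argument in the paper to compare against. Your proof via the Gysin sequence of the principal bundle $\mathbb{S}^1\hookrightarrow X\to X/G$ combined with the Borel localization theorem is correct and is one of the standard routes to this result (and in fact yields the slightly sharper conclusion $H^m(X/G;\mathbb{Q})=0$ for all $m\ge n$).

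One small inaccuracy worth flagging: the parenthetical ``finitistic (hence paracompact Hausdorff)'' is not a valid implication --- finitistic alone does not force paracompactness. Rather, in Bredon's setting the standing assumption is that spaces are paracompact Hausdorff, with finitistic imposed as an extra hypothesis; under that convention both the local triviality of $X\to X/G$ (via Gleason's slice theorem) and the localization theorem are available exactly as you invoke them, and the argument goes through unchanged.
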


			\noindent Recall  that 
			$H^*(\mathbb{S}^n \times \mathbb{S}^m \times \mathbb{S}^l; \mathbb{Q})=\mathbb{Q}[a,b,c]/<{a^{2},b^2, c^2}>,$ where deg $a=n,$ deg $b=m$ and  deg $c=l, ~1 \leq n \leq m\leq l.$\\
			\noindent Throughout the paper,  
			$H^*(X)$ will denote the \v{C}ech cohomology of a space $X$ with coefficient group $G=\mathbb{Q},$ and   $X\sim_\mathbb{Q} Y,$  means $H^*(X;\mathbb{Q} )\cong H^*(Y;\mathbb{Q})$ as graded cohomology algebras.

	\section{The  Cohomology  Algebra of  The Orbit Space of  Free Circle Actions on $ \mathbb{S}^n \times \mathbb{S}^m \times \mathbb{S}^l $}
	\noindent In this section, we determine the cohomology ring of orbit space of free $G = \mathbb{S}^1$ actions on finitistic spaces $X$ having rational cohomology of the product of three spheres, that is, $X\sim_{\mathbb{Q}} \mathbb{S}^n \times \mathbb{S}^m \times \mathbb{S}^l, $ where $1 \leq n \leq  m\leq l.$ \\
	 Let $\{E_r^{*,*},d_r\}$ be the Leray-Serre spectral sequence of the Borel fibration $X \hookrightarrow X_G \rightarrow B_G.$ As $G $ act freely on a finitistic space $X\sim_{\mathbb{Q}} \mathbb{S}^n \times \mathbb{S}^m \times \mathbb{S}^l, $ we get the spectral sequence $\{E_r^{*,*},d_r\}$ must be nondegenerate, that is, $E_2^{*,*}\neq E^{*,*}_\infty.$ Also, as $\pi_1(B_G)$ is  trivial,  by Proposition \ref{2.4}, we get  $$E_2^{k,i}=H^k(B_G) \otimes H^i(X)~\forall~  k,i\geq 0.$$ Thus $E_2^{k,i}\cong \mathbb{Q},$ for $k=0,n,m,l,n+m,n+l,m+l,n+m+l,$ and $\forall~ i\geq 0.$ \\  Let $a,b,c$ be generators of $H^*(X).$  The non-triviality of  differentials $d_r(1\otimes a),d_r(1\otimes b)$ or $d_r(1\otimes c)$ occurs under the following conditions: \begin{itemize}
		\item if $d_{r_1}(1\otimes a) \neq 0,$ then $r_1=n+1$ \& $n$ is odd, 
		\item if $d_{r_2}(1\otimes b) \neq 0,$ then $r_2=m-n+1$ \& $m-n$ is odd or $r_2=m+1$ \& $m$ is odd, and 
		\item if $d_{r_3}(1\otimes c) \neq 0,$ then $r_3=l-m-n+1$ \& $l-m-n$ is odd,  $r_3=l-m+1$ \& $l-m$ is odd, $r_3=l-n+1$ \& $l-n$ is odd or $r_3=l+1$ \& $l$ is odd. 
	\end{itemize} 

We have proved the following theorems under  conditions  that at least one  differential $d_r(1\otimes a),d_r(1\otimes b)$ or $d_r(1\otimes c)$  must be   nonzero.

\begin{theorem}\label{thm 3.6}
		Let $G=\mathbb{S}^1$ act freely on a finitistic space $X \sim_{\mathbb{Q}} \mathbb{S}^n \times \mathbb{S}^m \times \mathbb{S}^l, $ where $1\leq n\leq m \leq l.$ If $d_{r_1}(1\otimes a)\neq 0,$  then $H^*(X/G)$ is isomorphic to one of the following graded commutative algebras:

	\begin{enumerate}
	\item  $\mathbb{Q}[x,y,w,z]/<x^{\frac{n+1}{2}}, I_j, z^2,yz,wz>_{1\leq j \leq 3}$,\\ where $n$ odd,	deg $x=2,$ deg $y=m$, deg $w=l~\&$ deg $z=m+l;$ 
	$I_1=y^2+a_1z+a_2x^{\frac{2m-l}{2}}w,I_2=w^2+a_3x^{\frac{l-m}{2}}z,$ and $ I_3=yw+a_4z,$  $a_i \in \mathbb{Q}$, $1\leq i \leq 4;$  $a_1=0$ if $n \leq m < l,$  $a_2=0$ if $2m > n+l-1 $ or $l$ odd, $a_3=0$ if $l-m+1 > n$ or $l-m$ odd.

	\item $\mathbb{Q}[x,y,w,z]/<x^{\frac{n+1}{2}}, I_j, w^2,z^2,wz,yz,x^{\frac{l-m+1}{2}}y,x^{\frac{l-m+1}{2}}w>_{1\leq j \leq 2}$, \\ where $n$ odd, deg $x=2,$ deg $y=m,$ deg $w=n+m~\&$ deg $z=m+l;$
	$I_1=y^2+a_1x^{\frac{m-n}{2}}w,$  and
	$I_2=yw+a_2x^{\frac{m+n-l}{2}}z,$	$a_i \in \mathbb{Q},$ $1\leq i \leq 2;$ $a_1=0$ if $2m>n+l-1.$

\end{enumerate}
		
	\end{theorem}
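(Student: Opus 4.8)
The plan is to analyze the Leray--Serre spectral sequence $\{E_r^{*,*},d_r\}$ of the Borel fibration $X\hookrightarrow X_G\to B_G$ and then transport the answer to $X/G$ via Proposition~\ref{mac}. Since $B_G=B_{\mathbb{S}^1}$ has $H^*(B_G)=\mathbb{Q}[t]$ with $\deg t=2$, Proposition~\ref{2.4} gives $E_2^{*,*}=\mathbb{Q}[t]\otimes H^*(X)=\mathbb{Q}[t]\otimes\mathbb{Q}[a,b,c]/(a^2,b^2,c^2)$. The hypothesis $d_{r_1}(1\otimes a)\neq 0$ forces $r_1=n+1$ with $n$ odd; after rescaling $t$ we may take $d_{n+1}(1\otimes a)=t^{\frac{n+1}{2}}$, and I write $\ell_1=\frac{n+1}{2}$. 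Freeness of the action enters through Proposition~\ref{prop 4.5}: $H^i(X/G)=0$ for $i>n+m+l$, so $E_\infty^{*,*}$ must be a \emph{finite dimensional} $\mathbb{Q}$-space. Together with $d_r^2=0$ and the derivation property of $d_r$, this finiteness is what pins down all remaining differentials.

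The central step --- and the main obstacle --- is to show that, once $d_{n+1}(1\otimes a)\neq 0$, only two differential patterns survive. First, $1\otimes b$ supports no nonzero differential: $d_{m+1}(1\otimes b)$ would hit $t^{\frac{m+1}{2}}$, which already vanishes in $\mathbb{Q}[t]/(t^{\ell_1})$ because $m\geq n$; and $d_{m-n+1}(1\otimes b)$ hitting a multiple of $t^{\frac{m-n+1}{2}}(1\otimes a)$ is impossible, since a nonzero value kills a tail of the $(1\otimes a)$-tower, leaves $1\otimes ab$ a permanent cocycle and hence the row $q=n+m$ infinite, contradicting finiteness (when $m=n$ the only discrepancy is a $t^{\ell_1}$-term, removed by replacing $b$ by $b-\mu a$). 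The same bookkeeping applied to $1\otimes c$ rules out $d_{l+1}$, $d_{l-n+1}$ and $d_{l-m-n+1}$ on $1\otimes c$: a nonzero such differential leaves the rows $q=n+l$ and $q=n+m+l$ --- carrying $1\otimes ac$ and $1\otimes abc$, which can then support no further differential --- infinite on $E_\infty$. Thus the only undecided differential is $d_{l-m+1}(1\otimes c)$, giving Case~(a): $d_{l-m+1}(1\otimes c)=0$, so $1\otimes b,\,1\otimes c$ are permanent cocycles; and Case~(b): $d_{l-m+1}(1\otimes c)=\beta\,t^{\frac{l-m+1}{2}}(1\otimes b)$ with $\beta\neq 0$, which forces $l-m$ odd and $l\leq n+m$. (For the small value $n=1$ these exclusions already follow from $d_r^2=0$, since everything dies by the third page.)

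In Case~(a), $E_{n+2}=\mathbb{Q}[t]/(t^{\ell_1})\otimes\Lambda(1\otimes b,1\otimes c)=E_\infty$, the truncations of the $b$-, $c$- and $bc$-towers to $t^{\ell_1}$ coming from multiplication by $1\otimes a$; no further differential is possible. Choosing lifts $x,y,w,z\in H^*(X/G)$ of $t,\,1\otimes b,\,1\otimes c,\,1\otimes bc$ (degrees $2,m,l,m+l$), the relations $x^{\frac{n+1}{2}}=0$ and $z^2=yz=wz=0$ are immediate, since the images of these elements in $E_\infty$ vanish and no class of higher filtration sits in their degree. The products $y^2,\,w^2,\,yw$ lie in positive filtration, hence each is a $\mathbb{Q}$-combination of the monomials $x^ky,\,x^kw,\,x^kz$ that have the right total degree and survive to $E_\infty$ (i.e. have $x$-exponent $<\ell_1$); a degree count shows such a monomial exists precisely under the side conditions stated --- e.g. a $z$-term in $y^2$ needs $2m=m+l$, an $x^{\frac{2m-l}{2}}w$-term needs $2m-l$ a nonnegative even integer $\le n+l-1$, an $x^{\frac{l-m}{2}}z$-term needs $l-m$ a nonnegative even integer $<n+1$ --- and otherwise the corresponding $a_i$ vanishes. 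Absorbing, where needed, an $x^ky$- or $x^kw$-contribution into a new choice of $z$ or $w$, one is left with exactly the relations $x^{\frac{n+1}{2}},\,I_1,\,I_2,\,I_3,\,z^2,\,yz,\,wz$ of algebra~(1).

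In Case~(b), running the spectral sequence: $1\otimes c$ and $1\otimes ac$ die at page $l-m+1$, truncating the $b$- and $ab$-towers at $t^{\frac{l-m+1}{2}}$; then at page $n+1$ the classes $1\otimes a$ and $1\otimes abc$ die, truncating the $t$- and $bc$-towers at $t^{\ell_1}$, while $d_{n+1}(1\otimes ab)=t^{\ell_1}(1\otimes b)=0$ (using $\tfrac{l-m+1}{2}\leq\ell_1$), so $1\otimes ab$ survives; no further differentials remain. Hence $E_\infty$ has exactly the rows $q=0,m,n+m,m+l$ with towers truncated at $t^{\ell_1},t^{\frac{l-m+1}{2}},t^{\frac{l-m+1}{2}},t^{\ell_1}$. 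With lifts $x,y,w,z$ of $t,\,1\otimes b,\,1\otimes ab,\,1\otimes bc$, the same extension analysis yields $x^{\frac{n+1}{2}}=0$, $x^{\frac{l-m+1}{2}}y=x^{\frac{l-m+1}{2}}w=0$, $w^2=z^2=wz=yz=0$, and $y^2=-a_1x^{\frac{m-n}{2}}w$, $yw=-a_2x^{\frac{m+n-l}{2}}z$ (the exponents forced by degree, and $a_1=0$ once $2m>n+l-1$ makes $\tfrac{m-n}{2}\geq\tfrac{l-m+1}{2}$), i.e. algebra~(2). The only delicate point is the case analysis of the second paragraph; the rest is routine manipulation of towers and of multiplicative extensions.
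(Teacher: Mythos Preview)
Your overall strategy---reduce to the two patterns (a) and (b), compute $E_\infty$, then solve the multiplicative extension problem---is the same as the paper's, and your Cases~(a) and~(b) correspond exactly to the paper's Cases~(i) and~(iii). The paper organises the argument into four cases (according to whether $d_{r_2}(1\otimes b)$ and $d_{r_3}(1\otimes c)$ vanish) and then shows the extra cases collapse; you instead argue \emph{a priori} that, after a harmless change of generators, one may take $d_r(1\otimes b)=0$ for all $r$ and $d_r(1\otimes c)=0$ for $r\neq l-m+1$. This is a cleaner route, but the exclusion argument you give for it is where the gap lies.

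Concretely, you assert that if $d_{m-n+1}(1\otimes b)=c_1\,t^{(m-n+1)/2}\otimes a\neq 0$ (with $n<m$) then $1\otimes ab$ becomes a \emph{permanent cocycle}, forcing the row $q=n+m$ to be infinite. This is not true: after the rows $q=n$ and $q=m$ are emptied, $1\otimes ab$ can still support the transgression $d_{n+m+1}(1\otimes ab)\in E^{n+m+1,0}$, whose target $t^{(n+m+1)/2}$ is alive (only the band $t^{(n+1)/2},\dots,t^{m/2}$ was killed), so nothing in your bookkeeping forbids it. The same objection applies to your exclusion of $d_{l-n+1}(1\otimes c)$ and $d_{l-m-n+1}(1\otimes c)$: the classes $1\otimes ac$ and $1\otimes abc$ can still transgress via $d_{n+l+1}$ and $d_{n+m+l+1}$.

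The exclusion you want follows instead from the relation $b^2=0$ (and $c^2=0$) together with Leibniz. Since $n$ is odd and $m-n$ is odd, $m$ is even; hence in $E_{m-n+1}$ one has
\[
0=d_{m-n+1}\bigl((1\otimes b)^2\bigr)=2\,(1\otimes b)\,d_{m-n+1}(1\otimes b)=2c_1\,t^{(m-n+1)/2}\otimes ab.
\]
Because $t^{(m-n+1)/2}\otimes ab$ is nonzero in $E_{m-n+1}$ (check that nothing from the $c$-towers has hit it yet), this forces $c_1=0$. The analogous argument, using $c^2=0$ when $l$ is even, handles $d_{l-n+1}(1\otimes c)$ and $d_{l-m-n+1}(1\otimes c)$; the case $m-n+1=n+1$ is covered directly by $d_{n+1}^2=0$, as you note parenthetically. (The paper itself merely asserts ``this case is possible only when $n=m$'' and ``this case is not possible when $l>m+n$'' without justification, so this step is implicit there too.) Once these exclusions are established rigorously, the remainder of your computation of $E_\infty$ and the extension analysis in Cases~(a) and~(b) is correct and agrees with the paper.
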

	\begin{proof}
		As $d_{r_1}(1\otimes a)\neq 0,$ we get $r_1=n+1,$ where $n$ is odd. So, $d_{n+1}(1\otimes a)=c_0t^{\frac{n+1}{2}}\otimes 1,0\neq c_0 \in \mathbb{Q}.$ For the remaining differentials, the following four cases are possible:  (i) $d_{r_2}(1\otimes b)=d_{r_3}(1\otimes c)= 0,$ (ii) $d_{r_2}(1\otimes b) \neq 0 ~\&~ d_{r_3}(1\otimes c)=0,$ (iii) $d_{r_2}(1\otimes b)= 0 ~\&~ d_{r_3}(1\otimes c) \neq 0,$ and (iv) $ d_{r_2}(1\otimes b)\neq 0 ~ \& ~ d_{r_3}(1\otimes c) \neq 0 .$
		
		\noindent {\bf Case (i):} $d_{r_2}(1\otimes b)=0~\&$ $d_{r_3}(1\otimes c)=0.$\\
		In this case, we get  $d_{n+1}(1\otimes ab)=c_0t^{\frac{n+1}{2}}\otimes b $, $d_{n+1}(1\otimes ac)=c_0t^{\frac{n+1}{2}}\otimes c$ and $d_{n+1}(1\otimes abc)=c_0t^{\frac{n+1}{2}}\otimes bc,$ where $0\neq c_0\in \mathbb{Q}.$ So, $d_r=0$ for all $r>n+1.$ Thus  $E_{n+2}^{*,*}=E_{\infty}^{*,*}.$ \\ If $n\leq m<l,$ then  $E_{\infty}^{p,q} \cong \mathbb{Q}$ for $0 \leq p \leq n-1,p$ even $\&$ $q=0,m,l$ or $m+l,$ and  zero otherwise. \\ 
		For $l\geq m+n,$ the  cohomology groups of 
		$X/G$ are as follows:
		$$
		H^k(X_G)=
		\begin{cases}
			\mathbb{Q}  & j \leq k \leq n+j-1, j=0,m,l,m+l; k-j \mbox{~even},\\
				0 & \mbox{otherwise}
		
		\end{cases}
		$$ and, for $l< m+n,$ we have    
		$$
		H^k(X_G)=
		\begin{cases}
			\mathbb{Q}  & j \leq k < n+j-1, j=0,m+l; k-j \mbox{ even},\\
				\mathbb{Q}  & m \leq k < l;  k-m \mbox{~even}, m+n-1 < k \leq n+l-1;  k-{l} \mbox{~even},\\
			\mathbb{Q} \oplus \mathbb{Q} & l \leq k \leq m+n-1,  k-m ~\&~ k-l \mbox{~even}, \\
			0 & \mbox{otherwise.}
		\end{cases}
		$$ 
		The permanent cocycles  $t\otimes 1, 1 \otimes b,$ $1 \otimes c$ and $1 \otimes bc$ of $E_2^{*,*}$ determine the elements  $x\in E_{\infty}^{2,0}, u \in E_{\infty}^{0,m},$ $v \in E_{\infty}^{0,l}$ 
		and $s \in E_{\infty}^{0,m+l},$ respectively.  Thus the total complex is given by \begin{center}
			Tot $E_{\infty}^{*,*} = \mathbb{Q}[x,u,v,s]/<x^{\frac{n+1}{2}},u^2+\gamma_1v,v^2,s^2,uv+\gamma_2s,us,vs>,$
		\end{center}where deg $x=2,$ deg $u= m,$ deg $v= l~\&$   deg $s= m+l$ and ${\gamma_1,\gamma_2 \in \mathbb{Q}},$ $\gamma_1=0$ if $l \neq 2m.$ Let $y \in H^m(X_G), w \in H^l(X_G)$ and $ z \in H^{m+l}(X_G)$ such that $i^*(y)=b,i^*(w)=c,$ and $i^*(z)=bc,$ respectively. Clearly,  $I_1= y^2+b_1x^{\frac{2m-l}{2}}w=0,$ $I_2= w^2+b_2x^{\frac{l-m}{2}}z=0 ~\&$ $I_3= yw+b_3z=0,$   where $b_i \in \mathbb{Q}, 1\leq i \leq 3,$ $b_1=0 $ if $2m>n+l-1$ or $l$ is odd  and $b_2=0$ if $l+1>m+n$ or $l-m$ is odd.   By Proposition \ref{mac}, the cohomology ring of the orbit space $X/G$ is given by $$ \mathbb{Q}[x,y,w,z]/<x^{\frac{n+1}{2}}, I_j, z^2,yz,wz>_{1\leq j \leq 3}, $$  where deg $x=2,$ deg $y=m$, deg $w=n+m$ and deg $z=m+l.$ This realizes possibility (1).
	For  $n\leq m=l,$  we again get possibility (1). In this case, the $E_{\infty}$ page of the spectral sequence  is as follows: $E_{\infty}^{p,q} \cong \mathbb{Q}$ for $0 \leq p \leq n-1, p$ even $\&$ $q=0$ or $2m ;$  $E_{\infty}^{p,q}\cong \mathbb{Q} \oplus \mathbb{Q}   $ for $0 \leq p \leq n-1, p$ even $\&$ $q=m, $ and   zero otherwise. \\ 
	{\bf Case (ii)}  $d_{r_2}(1 \otimes b)\neq 0 ~~\&~~ d_{r_3}(1 \otimes c)=0.$\\ This case is possible only when $n=m.$ We have $d_{n+1}(1 \otimes a)=c_0t^{\frac{n+1}{2}}\otimes 1 ~\& ~d_{n+1}(1\otimes b)= c_1t^{\frac{n+1}{2}}\otimes 1, 0\neq c_1\in \mathbb{Q}.$ Consequently,  $d_{n+1}(1 \otimes ac)= c_0t^{\frac{n+1}{2}}\otimes c~\&~ d_{n+1}(1\otimes bc)= c_1t^{\frac{n+1}{2}}\otimes c,$ $d_{n+1}(1 \otimes ab)= t^{\frac{n+1}{2}}\otimes (c_1a-c_0b)$ ~$\&$~ $d_{n+1}(1 \otimes abc)=t^{\frac{n+1}{2}}\otimes (c_1ac-c_0bc).$ Thus $E_{n+2}^{*,*}=E_{\infty}^{*,*}.$ The elements  $t\otimes 1,1\otimes (c_1a-c_0b), 1 \otimes c$ and $1 \otimes c(c_1a-c_0b)$ are   permanent cocycles, and the  cohomology groups and cohomology algebra of $X/G$ are  same as in the case (i) when $n=m\leq l.$  \\
		{\bf Case (iii)}  $d_{r_2}(1 \otimes b)=0 ~~\&~~ d_{r_3}(1 \otimes c)\neq 0.$\\ As $d_{r_1}(1 \otimes a)\neq 0 ~~\&~~ d_{r_3}(1 \otimes c)\neq 0,$  this case is not possible when $l> m+n$ or $n<m=l.$ \\
		Now, consider $n \leq m <l\leq m+n.$ If $l<n+m$ then we must have  $d_{l-m+1}$  nontrivial. 
		 So, we get $d_{l-m+1}(1\otimes c)=c_2t^{\frac{l-m+1}{2}}\otimes b$ and $d_{l-m+1}(1\otimes ac)=c_2t^{\frac{l-m+1}{2}}\otimes ab.$ Clearly, $d_r=0$  for $l-m+1<r<n+1.$ Since $d_{n+1}(1 \otimes a)=c_0t^{n+1}\otimes 1,$ we get  $d_{n+1}(1 \otimes abc)=c_0t^{n+1}\otimes bc.$ Thus $E_{n+2}^{*,*}=E_{\infty}^{*,*},$  and $E_{\infty}^{p,q} \cong \mathbb{Q}$ for $0 \leq p \leq n-1$, $p$ even $\&$ $q=0$ or $m+l; E_{\infty}^{p,q} \cong \mathbb{Q}$ for $0 \leq p \leq l-m-1$, $p$ even $\&$ $q=m$ or $n+m,$ and zero  otherwise. Thus the cohomology groups of $X/G$ are as follows 
		$$
		H^k(X_G)=
		\begin{cases}
			\mathbb{Q}  & j \leq k \leq n+j-1, j=0,m+l; k-j \mbox{ even}\\
			\mathbb{Q}  &  m+j \leq k \leq l+j-1, j=0,n;k-(m+j)\mbox{ even}\\
			0 & \mbox{otherwise.}
		\end{cases}
		$$ 
		The permanent cocycles  $t\otimes 1, 1 \otimes b,$ $1 \otimes ab$ and $1 \otimes bc$ of $E_2^{*,*}$ determine the elements $x\in E_{\infty}^{2,0}, u \in E_{\infty}^{0,m},$ $v \in E_{\infty}^{0,n+m}$ 
		$\& ~s \in E_{\infty}^{0,m+l},$ respectively. 
		The total complex is given by \begin{center}
			Tot $E_{\infty}^{*,*} = \mathbb{Q}[x,u,v,s]/<x^{\frac{n+1}{2}},u^2+\gamma_1v,v^2,s^2,uv,us,vs,x^{\frac{l-m+1}{2}}u,x^{\frac{l-m+1}{2}}v>,$
		\end{center} where  deg $x=2,$ deg $u$ = $m$, deg $v$ = $n+m~\&$  deg $s= m+l,$ $ \gamma_1 \in \mathbb{Q},\gamma_1=0$ if $n< m.$ Let $ y \in H^m(X_G), w \in H^{n+m}(X_G)$ and $ z \in H^{m+l}(X_G)$ such that $ i^*(y)=b,i^*(w)=ab,$ and $i^*(z)=bc,$ respectively.  Thus for  $n\leq m<l<n+m,$ we have 	$I_1=y^2+b_1x^{\frac{m-n}{2}}w=0,$ $I_2=yw+b_2x^{\frac{m+n-l}{2}}z=0,$  where $b_1,b_2\in \mathbb{Q},$   $b_1=0 $ if $2m>n+l-1.$
		Thus the  cohomology ring  of the orbit space $X/G$ is given by  $$ \mathbb{Q}[x,y,w,z]/<x^{\frac{n+1}{2}}, I_j, w^2,z^2,wz,yz,x^{\frac{l-m+1}{2}}y,x^{\frac{l-m+1}{2}}w>_{1\leq j \leq 2},$$  where 
		deg $x=1,$ deg $y=m$, deg $w=n+m,$ and deg $z=m+l.$   This realizes possibility (2).\\ Now, if $n\leq m<l=m+n,$ then we must have  $d_{n+1}(1\otimes ab)\neq 0$ and $d_{n+1}(1 \otimes c)\neq 0.$ It is easy to see that  the cohomology algebra of $X/G$ is same as in the case (i) when $n \leq m<l$ and $l=m+n.$\\ Finally, we consider $n=m=l.$ We must have $d_{n+1}(1 \otimes c)=c_2t^{\frac{n+1}{2}}\otimes 1.$ So, we get $E_{n+2}^{*,*}=E_{\infty}^{*,*},$  and the cohomology algebra is  same as in the case (i) when $n=m=l.$ \\
		{\bf Case (iv)} $d_{r_2}(1 \otimes b)\neq 0 ~~\&~~ d_{r_3}(1 \otimes c)\neq 0.$\\ In this case, we must have $n=m \leq l.$ First, we consider $n=m<l.$ Then  we must have $l=2n.$    We get $d_{n+1}(1\otimes a)=c_0t^{\frac{n+1}{2}}\otimes 1,d_{n+1}(1 \otimes b)=c_1t^{\frac{n+1}{2}}\otimes 1, d_{n+1}(1\otimes ab)=t^{n+1}\otimes (c_1a+c_0b) ~\& ~ d_{n+1}(1 \otimes c)=c_2t^{n+1}\otimes (c_3a+c_4b),0 \neq c_i\in \mathbb{Q}, 0\leq i \leq 4.$ Consequently, $E_{n+2}^{*,*}=E_{\infty}^{*,*}.$ The elements  $t\otimes 1 , 1\otimes (c_1a+c_0b), 1 \otimes (c_2ab+c_1c)$ and $1 \otimes (ac-bc)$ are permanent cocycles, and the cohomology algebra of $X/G$ is  same as in the  case (i) when $n=m < l=2n.$ \\   Next, we consider $n=m=l.$ We get $d_{n+1}(1\otimes a),d_{n+1}(1 \otimes b)$ and $ d_{n+1}(1 \otimes c)$ are nonzero. So, $E_{n+2}^{*,*}=E_{\infty}^{*,*},$  and
		the cohomology algebra of $X/G$ is same as in the case (i) when $n=m=l.$  
	\end{proof}
\begin{theorem} \label{thm 3.7}
	Let $G=\mathbb{S}^1$ act freely on a finitistic space $X \sim_\mathbb{Q} \mathbb{S}^n \times \mathbb{S}^m \times \mathbb{S}^l, $ where $1\leq n\leq m \leq l.$ If $d_{r_1}(1\otimes a)=0 $ and $d_{r_2}(1\otimes b) \neq 0,$ then $H^*(X/G)$ is isomorphic to one of the following graded commutative algebras:
	\begin{enumerate}
	
	\item  $\mathbb{Q}[x,y,w,z]/<Q(x), I_j, z^2,x^{\frac{m-n+1}{2}}y,x^{\frac{m-n+1}{2}}z>_{1\leq j \leq 4},$ \\ where deg $x=2,$ deg $y=n,$ deg $w=l~\&$ deg $z=n+l;$
	$I_1=y^2+a_1x^{n}+a_2x^{\frac{2n-l}{2}}w+a_3x^{\frac{n}{2}}y,I_2=w^2+a_4x^{\frac{l}{2}}w+a_5x^{\frac{l-n}{2}}z+a_6x^{l}, I_3=yw+a_7z+a_8x^{\frac{n}{2}}w+a_9x^{\frac{n+l}{2}} ~\&~ I_4 = yz+a_{10}x^{\frac{n}{2}}z+a_{11}x^{n}w+a_{12}x^{\frac{2n+l}{2}}$ $a_i \in \mathbb{Q},$ $1\leq i \leq 12;$  $a_2=0$ if $l$ odd or $l>2n,$ $a_3=0$ if $n$ odd or  $m-1<2n, a_{10}=0$ if $2n+1>m;$   $Q(x)=x^{\frac{n+m+j'+1}{2}},$ $j'=0 ~\&$ $l.$   \\
	If $j'=0,$ then  $ a_8=0$ if $n$ odd  and  $a_i=0$ for $i=6,9,12.$\\
	If $j'=l,$ then $a_9=0$  if $n$ odd and    $a_{4}=a_{5}=a_{8}=a_{11}=0.$ 
	\item  $\mathbb{Q}[x,y,w,z]/<x^{\frac{m+1}{2}}, I_j, z^2,wz>_{1\leq j \leq 4},$\\ where $m$ odd, 	deg $x=2,$ deg $y=n$, deg $w=l~\&$ deg $z=n+l;$
	$I_1=y^2+a_1x^{\frac{n}{2}}y+a_2x^{\frac{2n-l}{2}}w+a_3x^{n}+a_4z,I_2=w^2+a_5x^{\frac{l-n}{2}}z, I_3=yw+a_6z+a_7x^{\frac{n}{2}}w~\&~ I_4 = yz+a_{8}x^{\frac{n}{2}}z+a_{9}x^{n}w,$ $a_i \in \mathbb{Q}$, $1\leq i \leq 9;$  $a_2=0$ if $m+l-1 <2n,$ $a_3=0$ if $n\neq  l ,$ $a_7=a_8=0$ if $n$ odd,  $a_5=0$ if $ m+n-1< l$ and $a_9=0$ if  $2n+1>m.$ 
	
	\item  $\mathbb{Q}[x,y,w,z]/<Q(x), I_j, z^2,wz,x^{\frac{m-n+1}{2}}y,x^{\frac{m-n+1}{2}}z,a_0x^{\frac{l-m-n+1}{2}}w>_{1\leq j \leq 4},$\\
	where deg $x=2,$ deg $y=n$, deg $w=n+m~\&$ deg $z=n+l;$ $I_1=y^2+a_1x^{n}+a_2x^{\frac{n}{2}}y,I_2=w^2+a_3x^{\frac{n+m}{2}}w+a_4x^{n+m}+a_5x^{\frac{2m+n-l}{2}}z, I_3= yw+a_6x^{\frac{n}{2}}w+a_7x^{\frac{2n+m}{2}}+a_8x^{\frac{n+m-l}{2}}z~\&~ I_4 = yz+a_9x^{\frac{n+l-m}{2}}w+a_{10}x^{\frac{n}{2}}z+a_{11}x^{\frac{2n+l}{2}},$ $a_i\in \mathbb{Q}, 0 \leq i \leq 11;$ $ a_2=0$ if $n$ odd or $m-1<2n;a_3=0$ if $l-1<n+m$ or $m=l;a_{10}=0$ if $m<2n+1$ or $n$ odd,   $Q(x)=x^{\frac{l+j'+1}{2}}, j'=0 ~\mbox{or}~ n+m.$ \\ If $j'=0,$ then $a_1=0$ if $l-1<2n,$  $a_4=0$ if $l-1<2n+2m$ or $m=l,a_5=0$ if $l-1<m+2n$ or  $m=l,a_6=0$ if $n$ odd, $a_7=0$ if $l-1<2n+m,$ $a_8=0$ if  $l <2n+1 ,$  $a_{10}=0$ if $m<2n+1~\&~a_0=a_{11}=0.$\\
	If $j'=n+m ,$ then  $a_6=0$ if $l<2n+m+1,$ and $a_5=a_8=a_9=0~ \&~ a_0 =1.$ 	  
	\item 	$\mathbb{Q}[x,y,w,z]/<x^{\frac{m+1}{2}}, I_j,w^2,
	z^2,wz,x^{\frac{l-n+1}{2}}y,x^{\frac{l-n+1}{2}}w>_{1\leq j \leq 3},$\\ where $m$ odd, deg $x=2,$ deg $y=n$, deg $w=n+m~\&$ deg $z=n+l;$ 
	$I_1=y^2+a_1x^{n}+a_2x^{\frac{n}{2}}y +a_3w,I_2=yw +a_4x^{\frac{n}{2}}w+a_5x^{\frac{m+n-l}{2}}z~\&~ I_3= yz+a_6x^{\frac{n}{2}}z+a_7x^{\frac{l+n-m}{2}}w,$ $a_i\in \mathbb{Q}, 1 \leq i \leq 7;$  $a_1=a_7=0$ if $m-1<2n,$ $ a_2=a_4=0$ if  $l<2n+1,$ $a_3=0$ if $n < m.$
\end{enumerate}
\end{theorem}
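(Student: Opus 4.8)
The plan is to follow the template of the proof of Theorem~\ref{thm 3.6}, studying the Leray--Serre spectral sequence $\{E_r^{*,*},d_r\}$ of the Borel fibration $X\hookrightarrow X_G\to B_G$, for which Proposition~\ref{2.4} gives $E_2^{k,i}=H^k(B_G)\otimes H^i(X)=\mathbb{Q}[t]\otimes H^i(X)$ with $\deg t=2$. Since $d_{r_1}(1\otimes a)=0$ the element $1\otimes a$ is a permanent cocycle, so in fiber degree $n$ the class $t^{j}\otimes a$ can only be truncated by a later differential with target in that column. The hypothesis $d_{r_2}(1\otimes b)\neq 0$ forces exactly one of: \textbf{Case (A)} $r_2=m-n+1$ with $m-n$ odd, so $d_{m-n+1}(1\otimes b)=c_1t^{\frac{m-n+1}{2}}\otimes a$; or \textbf{Case (B)} $r_2=m+1$ with $m$ odd (and Case (A) not applicable), so $d_{m+1}(1\otimes b)=c_1t^{\frac{m+1}{2}}\otimes 1$, where $0\neq c_1\in\mathbb{Q}$. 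Inside each I would split further according to whether $d_{r_3}(1\otimes c)=0$, or $d_{r_3}(1\otimes c)\neq 0$ for $r_3\in\{l-m-n+1,\,l-m+1,\,l-n+1,\,l+1\}$ under the appropriate parity/positivity constraints, discarding branches incompatible with $d_{r_1}(1\otimes a)=0$ or with truncations already forced. In each surviving branch the differentials on $ab,ac,bc,abc$ are then determined by the Leibniz rule together with $a^2=b^2=c^2=0$; e.g. in Case (A) one gets $d_{m-n+1}(1\otimes ab)=\pm c_1t^{\frac{m-n+1}{2}}\otimes a^2=0$, so $1\otimes ab$ survives, and $d_{m-n+1}(1\otimes bc)=\pm c_1 t^{\frac{m-n+1}{2}}\otimes ac$, which truncates the fiber-degree-$(n+l)$ column.

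Once all differentials are known, computing $E_\infty^{*,*}$ is pure bookkeeping: $H^i(X)\neq 0$ only for $i\in\{0,n,m,l,n+m,n+l,m+l,n+m+l\}$ and each $d_r$ is $\mathbb{Q}[t]$-linear, so every fiber-degree column collapses to a truncated module $\mathbb{Q}[t]/(t^{N})$, the exponent read off from the lowest-filtration class whose differential reaches that column. In particular the bottom row is $\mathbb{Q}[t]/(t^{N_0})$ with $N_0=\frac{m+1}{2}$ when $1\otimes b$ kills the base directly (Case (B)), and $N_0=\frac{n+m+1}{2}$ or $\frac{n+m+l+1}{2}$ in Case (A) when the base is killed by $1\otimes ab$ or $1\otimes abc$ (giving $Q(x)$ with $j'=0$ or $j'=l$ in (1)), while if instead $1\otimes c$ reaches the base one gets $N_0=\frac{l+1}{2}$ and the base is then killed by $1\otimes abc$ at $\frac{n+m+l+1}{2}$ (giving $Q(x)$ with $j'=0$ or $j'=n+m$ in (3)). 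By Proposition~\ref{mac} and the field hypothesis, $H^k(X/G)\cong H^k(X_G)\cong\bigoplus_{p+q=k}E_\infty^{p,q}$, which gives the additive structure, and Proposition~\ref{prop 4.5} confirms vanishing above the top degree. Conclusions (1),(3) arise in Case (A) and (2),(4) in Case (B); within each pair, (1)/(2) is the sub-case where $1\otimes c$ survives to provide a degree-$l$ generator, and (3)/(4) the sub-case where it is absorbed into $1\otimes ab$, leaving a degree-$(n+m)$ generator.

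For the ring structure choose $x\in H^2(X_G)$ restricting to $t\otimes 1$ and $y,w,z\in H^*(X_G)$ whose restrictions under $i^*$ are the surviving monomials ($a,c,ac$ in (1),(2); $a,ab,ac$ in (3),(4)), so that $\mathrm{Tot}\,E_\infty^{*,*}$ is the associated graded ring of $H^*(X/G)$ for the filtration. Lifting each relation of $\mathrm{Tot}\,E_\infty$ to $H^*(X_G)$ is unambiguous modulo strictly higher filtration, i.e. modulo $\mathbb{Q}$-linear combinations of powers of $x$ times generators in the correct total degree; these are precisely the correction terms with coefficients $a_i$ in $I_1,\dots,I_4$. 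Degree matching then forces most $a_i$ to vanish — a monomial like $x^{n/2}y$ occurs only when $n$ is even (hence ``$a_3=0$ if $n$ odd''), and $x^{\frac{2n-l}{2}}w$ only when $l\le 2n$ (hence ``$a_2=0$ if $l>2n$''), etc. — and the remaining conditions come from imposing internal consistency of the list $I_1,\dots,I_4$ (comparing, say, $y\cdot I_1$ against $x$-multiples of $I_3,I_4$, and using the truncation $Q(x)$). Lastly the boundary sub-cases where two indices coincide ($n=m$, $l=m$, $l=2n$, $l=n+m$, $l-m+1=n+1$, $\dots$) must be checked to confirm they reduce to one of (1)--(4); e.g. (1) with $j'=l$ and (3) with $j'=n+m$ coincide when $l=n+m$.

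The main obstacle is the size and interlocking nature of the case analysis: two possibilities for $r_2$ times up to five for the behaviour of $1\otimes c$, many ruled out or coalescing, and within each survivor the delicate determination of exactly which classes $t^{j}\otimes a^{\epsilon_1}b^{\epsilon_2}c^{\epsilon_3}$ reach $E_\infty$ when fiber degrees collide — these collisions are what produce the extra terms in the $I_j$ and the long conditional lists of vanishing $a_i$. No single step is hard, but exhaustively and correctly tracking the truncation exponents and the admissible correction terms across all branches is lengthy and is where essentially all the work lies.
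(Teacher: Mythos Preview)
Your plan is correct and matches the paper's own argument in all essential respects: the paper also splits first on the two possible values of $r_2$ (namely $m-n+1$ versus $m+1$) and on whether $d_{r_3}(1\otimes c)$ vanishes, uses the Leibniz rule to propagate the differentials to $ab,ac,bc,abc$, reads off the truncated $\mathbb{Q}[t]$-modules in each fiber degree, and then lifts the relations of $\mathrm{Tot}\,E_\infty^{*,*}$ to $H^*(X_G)$ with the $a_i$'s as filtration-correction coefficients constrained by degree and parity. The only cosmetic difference is that the paper organizes the outer split as Case~(i) $d_{r_3}(1\otimes c)=0$ versus Case~(ii) $d_{r_3}(1\otimes c)\neq 0$ and treats the two values of $r_2$ inside each, whereas you reverse that order; the resulting sub-branches and their identification with possibilities (1)--(4) are the same.
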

	\begin{proof}
	If $d_{r_1}(1\otimes a)=0~\&~d_{r_2}(1\otimes b)\neq 0,$ then either $r_2=m-n+1,$ where $m-n$ is odd or $r_2=m+1,$ where $m$ is odd. 
	In this theorem, we consider two cases: (i) $d_{r_3}(1\otimes c)=0$ and (ii)  $d_{r_3}(1\otimes c)\neq 0.$\\
	{\bf Case (i):} $d_{r_3}(1\otimes c)=0.$\\
	First, suppose that   $r_2=m-n+1,$ where $m-n$ is odd. We must have $n<m\leq l,$ and  $d_{m-n+1}(1\otimes b)=c_0t^{\frac{m-n+1}{2}}\otimes a, 0\neq c_0\in \mathbb{Q}.$ So, $d_{m-n+1}(1\otimes bc)=c_0t^{
		\frac{m-n+1}{2}}\otimes ac.$ \\
		If    $d_{m+n-l+1}$ is nontrivial, then  $d_{m+n-l+1}(1\otimes ab)=c_1t^{\frac{n+m-l+1}{2}}\otimes c, 0\neq c_1\in \mathbb{Q},$ and $l$ is even. As $G$ acts freely on $X,$  we must have  $d_{m+n+l+1}(1\otimes abc)=c_2t^{\frac{n+m+l+1}{2}}\otimes 1,0\neq c_2\in \mathbb{Q}.$ Thus $E_{n+m+l+2}^{*,*}=E_{\infty}^{*,*}$ and $E_{\infty}^{p,q} \cong \mathbb{Q}$ for $0 \leq p \leq n+m-l-1$, $p$ even, $q=l;0 \leq p \leq m-n-1, p$ even, $q=n,n+l;0 \leq p \leq n+m+l-1,p$ even, $q=0,$ and zero otherwise . For $n<m\leq l,$ the cohomology groups $H^k(X_G)$ are given by 
	$$H^k(X_G)=
	\begin{cases}
		\mathbb{Q}  & j \leq k(even) < n+j,j=0,m+l,\\
			\mathbb{Q}  &  m+j \leq k(even) < l+j,j=0,n,\\
		\mathbb{Q} \oplus \mathbb{Q} & l \leq k < n+m,k-l~ even,\\
		\mathbb{Q} \oplus \mathbb{Q} & n+j \leq k <m+j, j=0,l,k-(n+j)~ even,\\
		0 & \mbox{otherwise.}
	\end{cases}
	$$
	The permanent cocycles $t\otimes 1, 1 \otimes a,$ $1 \otimes c $ and $1 \otimes ac$ of $E_2^{*,*}$ determine the elements $x \in   E_{\infty}^{2,0}, u \in E_{\infty}^{0,n},$ $v \in E_{\infty}^{0,l}$ 
	and $ ~s \in E_{\infty}^{0,n+l},$ respectively. 
	The total complex Tot$ E_{\infty}^{*,*}$ is given by  $${\mathbb{Q}[x,u,v,s]}/ <x^{\frac{n+m+l+1}{2}}, u^2+\gamma_1v,v^2,s^2,uv+\gamma_2s,us,vs,x^{\frac{m-n+1}{2}}u,x^{\frac{m-n+1}{2}}s, x^{\frac{m+n-l+1}{2}}v>,$$ where deg $x=2,$ deg $u = n$, deg $v = l~\&$  deg $s= n+l$ and  $\gamma_1,\gamma_2 \in \mathbb{Q},$ $\gamma_1=0$ if $l\neq 2n.$  Let $ y \in H^n(X_G), w \in H^{l}(X_G)$ and $ z \in H^{n+l}(X_G)$ such that $i^*(y)=a,i^*(w)=c$ and $i^*(z)=ac,$ respectively. Clearly, $I_1=y^2+b_1x^{n}+b_2x^{\frac{2n-l}{2}}w+b_3x^{\frac{n}{2}}y=0,$ $I_2=w^2+b_4x^{l}=0 ,$ $ I_3=yw+b_5z+b_6x^{\frac{n+l}{2}}=0,$ and $I_4=yz+b_7x^{\frac{n}{2}}z+b_{8}x^{\frac{2n+l}{2}},$ $b_i \in \mathbb{Q}, 1 \leq i \leq 9; $   $b_2=0$ if $l$ odd or  $l>2n, b_3=0$ if $m-1<2n$ or $n$ odd and  $b_6=0$ if $n$ odd, $b_7=0$ if $2n+1>m.$   Then the  cohomology ring of the orbit space $X/G$ is given by $$ { \mathbb{Q}[x,y,w,z]}/{<x^{\frac{n+m+l+1}{2}}, I_j, z^2,wz,x^{\frac{m-n+1}{2}}y,x^{\frac{m-n+1}{2}}z,x^{\frac{m+n-l+1}{2}}w>_{1\leq j \leq 4.}}$$  where deg $x=2,$ deg $y=n,$ deg $w=l$ and deg $z=n+l.$ This realizes possibility (1) when $j'=l.$\\
	If $d_{m+n-l+1}$ is trivial, then $d_{n+m+1}$ must be nontrivial. Therefore, we have $d_{n+m+1}(1\otimes ab)=t^{\frac{n+m+1}{2}}\otimes 1.$ Consequently, $d_{n+m+1}(1\otimes abc)=t^{\frac{n+m+1}{2}}\otimes c.$ Thus $E_{n+m+2}^{*,*}=E_{\infty}^{*,*}, $ and  hence $E_{\infty}^{p,q} \cong \mathbb{Q}$ for $0 \leq p(even) \leq n+m-1$, $q=0,l;0 \leq p(even) \leq m-n-1$, $q=n,n+l,$ and zero otherwise. For $n<m\leq l\leq m+n,$ the cohomology groups $H^k(X_G)$ are given by 
	\\
	$$H^k(X_G)=
	\begin{cases}
		\mathbb{Q}  & 0 \leq k(even) < n;  m \leq k(even) < l,n+m< k < n+l,k-l\mbox{~even},\\
		\mathbb{Q}  & m+l<k\leq n+m+l-1, k-l\mbox{~even},\\
		\mathbb{Q} \oplus \mathbb{Q} &  n\leq k < m, k-n \mbox{~even},l \leq k \leq n+m-1,k-l\mbox{~even},\\
			\mathbb{Q} \oplus \mathbb{Q} & n+l\leq k \leq m+l-1, k-(n+l)\mbox{~even}\\
		0 & \mbox{otherwise,}
	\end{cases}
	$$
	and, for $l>m+n,$ we have 
	$$
	H^k(X_G)=
	\begin{cases}
		\mathbb{Q}  & 0 \leq k(even) < n; j< k < n+j,j=m,l,k-j\mbox{~even}\\
			\mathbb{Q}  & m+l< k\leq m+n+l-1,k-l\mbox{~even}\\
		\mathbb{Q} \oplus \mathbb{Q} & n+j\leq k < m+j,j=0\mbox{~and~} k,n \mbox{~even};j=l \mbox{~and~}k-{n+l}\mbox{~even}\\

		0 & \mbox{otherwise.}
	\end{cases}
$$
	The permanent cocycles $t\otimes 1, 1 \otimes a,$ $1 \otimes c $ and $1 \otimes ac$ of $E_2^{*,*}$ determine the elements  $x\in   E_{\infty}^{2,0},  u \in E_{\infty}^{0,n},$ $v \in E_{\infty}^{0,l},$ and
	$s \in E_{\infty}^{0,n+l},$ respectively. Thus the total complex Tot$ E_{\infty}^{*,*}$ is given by \begin{center}
		$\mathbb{Q}[x,u,v,s]/<x^{\frac{n+m+1}{2}},u^2+\gamma_1v,v^2,s^2,uv+\gamma_2s,us,vs,x^{\frac{m-n+1}{2}}u,x^{\frac{m-n+1}{2}}s>,$
	\end{center} 
 where deg $x=2,$ deg $u$ = $n$, deg $v$ = $l$ $\&$  deg $s= n+l$ and $\gamma_1,\gamma_2 \in \mathbb{Q}, \gamma_1=0 $ if $l\neq 2n.$ Let $y \in H^n(X_G), w \in H^{l}(X_G)$ and $ z \in H^{n+l}(X_G)$ such that $i^*(y)=a,i^*(w)=c$ and $i^*(z)=ac,$ respectively. Clearly, we have  $I_1=y^2+b_1x^{n}+b_2x^{\frac{2n-l}{2}}w+b_3x^{\frac{n}{2}}y=0,$ $I_2=w^2+b_4x^{\frac{l}{2}}w+b_5x^{\frac{l-n}{2}}z=0,$ $I_3=yw+b_6z+b_7x^{\frac{n}{2}}w=0,$ and $I_4=yz+b_8x^{\frac{n}{2}}z+b_{9}x^{n}w=0,$  $b_i \in \mathbb{Q}, 1 \leq i \leq 9;$ $ b_2=0$ if $l>2n$ or $l$ odd, $ b_3=0$ if $m<2n+1$ or $n$ odd,    $b_4=0$ if $l$ odd or $l>n+m-1,$ $b_7=0$ if $n$ odd and $b_8=0$ if $m<2n+1.$  Thus the  cohomology ring of the orbit space $X/G$ is given by $${ \mathbb{Q}[x,y,w,z]}/{<x^{\frac{n+m+1}{2}}, I_j, z^2,wz,x^{\frac{m-n+1}{2}}y,x^{\frac{m-n+1}{2}}z>_{1\leq j \leq 4}},$$ where deg $x=2,$ deg $y=n,$ deg $w=l$ and deg $z=n+l.$ This realizes possibility (1) when $j'=0.$\\ 
	Next, we consider $r_2=m+1,$ where $m$ is odd. We have $d_{m+1}(1\otimes b)=c_0t^{\frac{m+1}{2}}\otimes 1.$ So, we get  $d_{m+1}(1\otimes ab)=c_0t^{\frac{m+1}{2}}\otimes a $, $d_{m+1}(1\otimes bc)=c_0t^{\frac{m+1}{2}}\otimes c$ and $d_{m+1}(1\otimes abc)=c_0t^{\frac{m+1}{2}}\otimes ac.$ Clearly, $d_r=0$ for all $r>m+1.$ Thus we get $E_{m+2}^{*,*}=E_{\infty}^{*,*}.$ \\ If $n\leq m<l$ or $n<m =l,$ then  $E_{\infty}^{p,q} \cong \mathbb{Q}$ for $0 \leq p \leq m, p$ even $\&$ $q=0,n,l$ or $n+l,$ and zero otherwise.\\ If $n= m=l$ then  $E_{\infty}^{p,q} \cong \mathbb{Q}$ for $0 \leq p \leq n-1, p$ even $\&$ $q=0$ or $l,$  and $E_{\infty}^{p,n} \cong \mathbb{Q}\oplus \mathbb{Q}$ for $0 \leq p \leq n-1, p$ even, and   zero otherwise. For $n\leq m\leq l$  and  $l< m+n,$ the resulting cohomology groups of 
	$X/G$ are as follows. \\
	$$ H^k(X_G)=
	\begin{cases}
		\mathbb{Q}  & 0 \leq k(even) < n; m \leq k < l,k-n ~\mbox{even},\\
		\mathbb{Q}  & n+m<k<n+l, k-l ~\mbox{ even}\\
		\mathbb{Q}  & m+l-1< k< m+n+l-1,k-(n+l)~\mbox{ even}\\
		\mathbb{Q} \oplus \mathbb{Q} & n\leq k < m,k-n~\mbox{even}; l \leq k \leq n+m-1,k-l~\&~ k-n \mbox{ even}\\
		\mathbb{Q} \oplus \mathbb{Q} & n+l \leq k \leq m+l-1, k-(n+l) \mbox{ even}\\
		0 & \mbox{otherwise.}
	\end{cases}
	$$ and, for $n\leq m<l$ and $l > m+n, $ we have 
	$$
	H^k(X_G)=
	\begin{cases}
		\mathbb{Q}  & 0 \leq k(even) < n; m \leq k \leq n+m-1 ,k-n ~\mbox{even},\\
		\mathbb{Q}  & l\leq k<n+l, k-l ~\mbox{ even},\\
	\mathbb{Q}  & m+l<k<n+m+l, k-(n+l) ~\mbox{ even},\\
	\mathbb{Q} \oplus \mathbb{Q} & n\leq k < m,k-n~\mbox{even},\\
	\mathbb{Q} \oplus \mathbb{Q} &  n+l \leq k \leq m+l-1,k-l~\&~ k-(n+l) \mbox{ even},\\
	\mathbb{Q} \oplus \mathbb{Q} & n+l \leq k \leq m+l-1, k-(n+l) \mbox{ even},\\
	0 & \mbox{otherwise.}
	\end{cases}
	$$
	The permanent cocycles $t\otimes 1, 1 \otimes a,$ $1 \otimes $c and $1 \otimes ac$ of $E_2^{*,*}$ determine the elements $ x\in   E_{\infty}^{2,0}, u \in E_{\infty}^{0,n},$ $v \in E_{\infty}^{0,l}$ and
	$s \in E_{\infty}^{0,n+l},$ respectively.  Thus the total complex is given by \begin{center}
		Tot $E_{\infty}^{*,*} = \mathbb{Q}[x,u,v,s]/<x^{\frac{m+1}{2}},u^2+\gamma_1v+\gamma_2v,v^2,s^2,uv+\gamma_3s,us,vs>,$	\end{center} where deg $x=2,$ deg $u= n,$ deg $v= l,$   deg $s= n+l,$ and ${\gamma_1,\gamma_2~\&~\gamma_3 \in \mathbb{Q}},$ $\gamma_1=0$ if $l \neq 2n~\&~\gamma_2=0$ if $n\neq l.$ 
	Let $y \in H^n(X_G), w \in H^l(X_G)$ and $ z \in H^{n+l}(X_G)$ such that $i^*(y)=a,i^*(w)=c,$ and $i^*(z)=ac,$ respectively. We have $I_1= y^2+b_1x^{\frac{2n-l}{2}}w+b_2x^{\frac{n}{2}}y+b_3x^{n}+b_4z=0,$ $I_2= w^2+b_5x^{\frac{l-n}{2}}z=0,$   $I_3= yw+b_6z+b_7x^{\frac{n}{2}}w=0,$ and $I_4=yz+b_8x^{\frac{n}{2}}z+b_9x^{n}w=0,$ $b_i \in \mathbb{Q}, 0\leq i \leq 9;$ $b_1=0 $ if $2n>m+l-1,$ $b_2=0$ if $n$ odd,$b_3=0$ if $2n+1>m,$ $b_4=0$ if $n\neq l,$  $b_5=0$ if $l>m+n-1$ and $b_7=b_8=0$ if $n$ odd and $b_9=0$ if  $2n+1>m.$   Thus the  cohomology ring of the orbit space $X/G$ is given by $$ \mathbb{Q}[x,y,w,z]/<x^{\frac{m+1}{2}}, I_j, z^2,wz>_{1\leq j \leq 4} ,$$ where  deg $x=2,$ deg $y=n,$ deg $w=l,$ and  deg $z=n+l.$	   This realizes possibility (2). \\ 
	{\bf Case (ii):}  $d_{r_3}(1\otimes c)\neq 0.$\\ 
	First, suppose that $r_2=m-n+1,$ where $m-n$ is odd. We must have  $n <m \leq l,$ and  $d_{m-n+1}(1\otimes b)=c_0t^{\frac{m-n+1}{2}}\otimes a, 0\neq c_0\in \mathbb{Q}.$ Consequently, $d_{m-n+1}(1\otimes bc)=t^{\frac{m-n+1}{2}}\otimes ac.$ In this case, we have $r_3=l-m-n+1$ or $l+1.$ \\If  $r_3=l+1,$ where $l$ is odd, then $d_{l+1}(1\otimes c) = c_0t^{\frac{l+1}{2}}\otimes 1$ and $d_{l+1}(1\otimes abc) = c_0t^{\frac{l+1}{2}}\otimes ab.$ Thus $E_{l+2}^{*,*}=E_{\infty}^{*,*}.$ If $n<m<l,$  then $E_{\infty}^{p,q}\cong \mathbb{Q},$ for $0\leq p \leq m-n-1,p$ even $q=n$ or $n+l; 0 \leq p \leq l-1,~p$ even $q=0$ or $n+m,$ and  zero otherwise. \\ For $l< m+n,$ the cohomology groups of $X/G$ are as follows.
	$$H^k(X_G)=
	\begin{cases}
		\mathbb{Q}  & 0 \leq k(even) < n; m-1< k(even)\leq l-1,\\
		\mathbb{Q}  & m+n \leq k < n+l,k-(n+m)\mbox{ even},\\
		\mathbb{Q}  & m+l<k< m+n+l,k-(n+m)\mbox{ even},\\
		\mathbb{Q} \oplus \mathbb{Q} & n\leq k(even) \leq m-1,\\
			\mathbb{Q} \oplus \mathbb{Q} & n+l\leq k \leq m+l-1, k-(n+l)~\&~k-(n+m) \mbox{ even},\\
		0 & \mbox{otherwise},
	\end{cases}
	$$  and, for $l \geq m+n, $ we have 
	$$ H^k(X_G)=
	\begin{cases}
		\mathbb{Q}  & 0 \leq k(even) < n; m < k < n+m,k~\mbox{even} ,\\
		\mathbb{Q}  & 	j < k < n+j,j=l~\mbox{or}~m+l-1~\&~k-(n+m)~\mbox{even},\\
		\mathbb{Q} \oplus \mathbb{Q} & n \leq k \leq m,n~\mbox{even};m+n \leq k \leq l-1,k~\&~k-(n+m)\mbox{~even} \\
			\mathbb{Q} \oplus \mathbb{Q} & n+l \leq k \leq m+l-1,k-(n+l)\mbox{~even} \\
		0 & \mbox{otherwise.}
	\end{cases}
	$$
	If $n<m=l,$ then $E_{\infty}^{p,q}\cong \mathbb{Q},$ for $0\leq p(even) \leq m-1,q=0;0\leq p \leq m-n,q=n;m-n<p(even)\leq m , q=n, $ $m-n-1\leq p(even)\leq m-1,q=n+m$ and $E_{\infty}^{p,q}\cong \mathbb{Q}\oplus \mathbb{Q},$ for $ 0 \leq p(even) \leq m-n-1,q=n+m,$ and zero otherwise.\\
	Thus the resulting cohomology groups of $X_G$ are given by 
	$$
	H^k(X_G)=
	\begin{cases}
		\mathbb{Q}  & 0 \leq k(even) < n; 2m-1<k\leq 2m+n-1,k-(n+m)~\mbox{even}\\
		\mathbb{Q} \oplus \mathbb{Q} & n\leq k \leq m-1,n~\mbox{even}, n+m\leq k \leq 2m-1,k-(n+m)~\mbox{even}\\
		0 & \mbox{otherwise},
	\end{cases}
	$$ 
	The permanent cocycles $t\otimes 1, 1 \otimes a,$ $1 \otimes ab$ and $1 \otimes ac$ of $E_2^{*,*}$ determine the elements $x\in E_{\infty}^{2,0}, u \in E_{\infty}^{0,n},$ $v \in E_{\infty}^{0,n+m}$ and
	$s \in E_{\infty}^{0,n+l},$ respectively.  Thus for $n<m\leq l,$ the total complex is given by \begin{center}
		Tot $E_{\infty}^{*,*} = \mathbb{Q}[x,u,v,s]/<x^{\frac{l+1}{2}},u^2,v^2+\gamma_1s,s^2,uv+\gamma_2s,us,vs,x^{\frac{m-n+1}{2}}u,x^{\frac{m-n+1}{2}}s>,$
	\end{center} where deg $x=2,$ deg $u= n,$ deg $v= n+m,$  deg $s= n+l$ and ${\gamma_1, \gamma_2  \in \mathbb{Q}};$ $\gamma_1=0$ if $l \neq n+2m$ and  $\gamma_2=0$ if $l \neq n+m.$ Let $y \in H^n(X_G), w \in H^{n+m}(X_G)$ and $ z \in H^{n+l}(X_G)$ such that $i^*(y)=a,i^*(w)=c,$ and $i^*(z)=ac,$ respectively. Clearly,  we have $I_1=y^2+b_1x^{n}+b_2x^{\frac{n}{2}}y=0,$ $I_2=w^2+b_3x^{\frac{n+m}{2}}w+b_4x^{n+m}+b_5x^{\frac{2m+n-l}{2}}z=0,$ $ I_3=yw+b_6x^{\frac{n}{2}}w+b_7x^{\frac{2n+m}{2}}+b_8x^{\frac{n+m-l}{2}}z=0,$ $I_4=yz+b_9x^{\frac{n+l-m}{2}}w+b_{10}x^{\frac{n}{2}}z=0,b_i\in \mathbb{Q}, 1\leq i \leq 10;$ $ b_1=0 ~\mbox{if}~ l-1<2n,b_2=0$ if $n$ odd or $ m-n<n+1;$ $ b_3=0 ~\mbox{if}~ l-1<n+m ~\mbox{or}~ m=l, b_4=0 ~\mbox{if}~ l-1<2n+2m ~\mbox{or}~ m=l,$  $ b_5=0$ if $l-1<m+2n$ or $m=l,$$b_6=0$ if $n$ odd, $b_7=0 ~\mbox{if}~ l-1<2n+m,b_8=0 $ if $l<2n+1$ and  $ b_{10}=0 ~\mbox{if}~ 2n+1>m.
	$ Thus the  cohomology ring of the orbit space $X/G$ is given by  $$\mathbb{Q}[x,y,w,z]/<x^{\frac{l+1}{2}}, I_j, z^2,wz,x^{\frac{m-n+1}{2}}y,x^{\frac{m-n+1}{2}}z>_{1\leq j \leq 4},$$ where deg $x=2,$  deg $y=n,$  deg $w=n+m$ and  deg $z=n+l.$
	This realizes possibility (3) when $j'=0.$\\
	If $r_3=l-m-n+1,$ then we must have  $l>n+m$ and $d_{l-n-m+1}(1\otimes c)=t^{\frac{l-n-m+1}{2}} \otimes ab.$ As $G$ acts freely on $X,$ we get  $d_{n+m+l+1}(1\otimes abc)= t^{\frac{n+m+l+1}{2}}\otimes 1.$ Thus $E_{n+m+l+2}^{*,*}=E_{\infty}^{*,*}.$ We get $E_{\infty}^{p,q}\cong\mathbb{Q},$ for  $0 \leq p(even) \leq n+m+l-1, q=0;0\leq p(even) \leq m-n-1,q=n,n+l; 0\leq p(even) \leq l-m-n-1,q=n+m,$ and zero otherwise. Consequently, the cohomology groups of $X_G$ are given by
	$$
	H^k(X_G)=
	\begin{cases}
		\mathbb{Q}  & 0 \leq k(even) < n; l+j \leq k(even) \leq  n+j-1,j=0,m,\\
		\mathbb{Q} \oplus \mathbb{Q} & n+j \leq k(even) \leq m+j-1,j=0,l~\&~k-j~\mbox{even},\\
		\mathbb{Q} \oplus \mathbb{Q} & n+m \leq k(even) \leq l-1,k-(n+m)~\mbox{even}, \\
		0 & \mbox{otherwise.}
	\end{cases}
	$$
	The permanent cocycles $t\otimes 1, 1 \otimes a,$ $1 \otimes ab$ and $1 \otimes ac$ of $E_2^{*,*}$  determine the elements $x\in E_{\infty}^{2,0}, u \in E_{\infty}^{0,n},$ $v \in E_{\infty}^{0,n+m}$ and 
	$s \in E_{\infty}^{0,n+l},$ respectively.  Thus  the total complex Tot $E_{\infty}^{*,*}$ is given by \begin{center}
		$\mathbb{Q}[x,u,v,s]/<x^{\frac{n+m+l+1}{2}},u^2,v^2+\gamma_1s,s^2,uv,us,vs,x^{\frac{m-n+1}{2}}u,x^{\frac{m-n+1}{2}}s,x^{\frac{l-m-n+1}{2}}v>,$
	\end{center}   where deg $x=2,$ deg $u= n,$ deg $v= n+m,$  deg $s= n+l$ and ${\gamma_1  \in \mathbb{Q}},$ $\gamma_1=0$ if $l \neq n+2m.$ Let $y \in H^n(X_G), w \in H^{n+m}(X_G)$ and $ z \in H^{n+l}(X_G)$ such that $i^*(y)=a,i^*(w)=c,$ and $i^*(z)=ac,$ respectively. Clearly, $I_1=y^2+b_1x^{n} +b_2x^{\frac{n}{2}}y=0,$ $I_2=w^2+b_3x^{n+m}+b_4x^{\frac{n+m}{2}}w=0,$ $I_3=yw+b_5x^{\frac{2n+m}{2}}+b_6x^{\frac{n}{2}}w=0,$ $I_4=yz+b_7x^{\frac{2n+l}{2}}+b_8x^{\frac{n}{2}}z,b_i\in \mathbb{Q}, 1\leq i \leq  8;$ $b_2=0$ if $n$ odd or $m-1<2n; $ $b_4=0$ if $l<n+m+1, $ $b_6=0$ if  $l<2n+m+1,$ and $b_8=0$ if $n$ odd or  $m<2n+1.$ So, the cohomology ring $H^*(X_G)$ is given by $${\mathbb{Q}[x,y,w,z]}/{<x^{\frac{n+m+l+1}{2}}, I_j, z^2,wz,x^{\frac{m-n+1}{2}}y,x^{\frac{m-n+1}{2}}z,x^{\frac{l-m-n+1}{2}}w>_{1\leq j \leq 4}}$$ where deg $x=2,$  deg $y=n,$  deg $w=n+m$ and deg $x=n+l.$ This realizes possibility (3) when $j'=m+n.$

	Finally, suppose that  $r_2=m+1.$ Clearly, either  $r_3=l-n+1$ or $l+1.$\\ First, we consider $r_3=l-n+1.$ If $m<l-n,$ then $d_{r_3}(1\otimes c)=0,$ which contradicts our hypothesis. So, we get $l-n\leq m.$ As $d_{l-n+1}$ is nontrivial, we get $d_{l-n+1}(1\otimes c)=c_2t^{\frac{l-n+1}{2}}\otimes a,0\neq c_2\in \mathbb{Q} $ and $d_{l-n+1}(1\otimes bc)=c_2t^{\frac{l-n+1}{2}}\otimes ab.$ Also,  we have $d_{m+1}(1\otimes b)=c_1t^{\frac{m+1}{2}}\otimes 1~\&~d_{m+1}(1\otimes abc)=c_1t^{\frac{m+1}{2}}\otimes  ac.$ Thus $E_{m+2}^{*,*}=E_{\infty}^{*,*}.$ If $n\leq m<l$ and $l-n<m,$  then  $E_{\infty}^{p,q}\cong \mathbb{Q},$ for $0\leq p(even) \leq m-1,q=0$ or $n+l; 0\leq p(even) \leq l-n-1, q=n$ or $n+m,$ and zero otherwise. Thus the cohomology groups   are given by 
	$$H^k(X_G)=
	\begin{cases}
		\mathbb{Q}  & 0 \leq k(even) < n; m< k\leq l-1,k-n\mbox{ even},\\
\mathbb{Q}  &  m+n \leq k < n+l,k-(n+m)~\mbox{even,}\\
		\mathbb{Q}  & 	m+l\leq k< m+n+l,k-(n+l)~\mbox{even},\\
		\mathbb{Q} \oplus \mathbb{Q} & n+j\leq k \leq m+j-1, j=0~\&~n~\mbox{even},j=l~\&~k-(n+m)~\mbox{even},\\
		0 & \mbox{otherwise.}
	\end{cases}
	$$ The permanent cocycles $t\otimes 1, 1 \otimes a,$ $1 \otimes ab$ and $1 \otimes ac$ of $E_2^{*,*}$  determine the elements $x\in   E_{\infty}^{2,0},$ $u \in E_{\infty}^{0,n},$ $v \in E_{\infty}^{0,n+m}$ and $s \in E_{\infty}^{0,n+l},$ respectively.  Then the total complex Tot $E_{\infty}^{*,*}$ is given by \begin{center}
		$\mathbb{Q}[x,u,v,s]/<x^{\frac{m+1}{2}},u^2+\gamma_1v,v^2,s^2,uv,us,vs,x^{\frac{l-n+1}{2}}u,x^{\frac{l-n+1}{2}}v>,$ 
	\end{center}  where deg $x=2,$ deg $u= n,$ deg $v= n+m,$  deg $s= n+l$ and  ${\gamma_1  \in \mathbb{Q}},$ $\gamma_1=0$ if $ n<m.$  Let  $y \in H^n(X_G), w \in H^{n+m}(X_G)$ and $ z \in H^{n+l}(X_G)$ such that $i^*(y)=a,i^*(w)=c,$ and $i^*(z)=ac,$ respectively. We have $I_1=y^2+b_1x^{n}+b_2x^{\frac{n}{2}}y +b_3w=0,I_2=yw +b_4x^{\frac{n}{2}}w+b_5x^{\frac{m+n-l}{2}}z=0,$ and $ I_3= yz+b_6x^{\frac{n}{2}}z+b_7x^{\frac{l+n-m}{2}}w=0,$ $b_i\in \mathbb{Q}, 1 \leq i \leq 7;$  $b_2=b_4=0$ if $l<2n+1,$ $b_3=0$ if $n < m$ and $ b_1=b_7=0$ if  $m<2n+1.$ Therefore, the cohomology ring $H^*(X_G)$ is given by	$$\mathbb{Q}[x,y,w,z]/<x^{\frac{m+1}{2}}, I_j,w^2,
	z^2,wz,x^{\frac{l-n+1}{2}}y,x^{\frac{l-n+1}{2}}w>_{1\leq j \leq 3}$$
	deg $x=2,$ deg $y=n,$ deg $w=n+m,$ and  deg $z=n+l.$  This realizes possibility (4). \\
	Next, if $n\leq m<l,$ then the cohomology groups and  cohomology algebra are same in the case (i) when $r_2=m+1$ and $l=m+n.$\\ If $n<m=l$ and $d_{l-n+1}(1\otimes c)\neq 0,$ then the cohomology groups and cohomology algebra are same as in the   case (ii) when $n< m=l.$\\ Next, if $n<m=l$ and $d_{l-n+1}(1\otimes c)= 0,$ then we must have $r_2=r_3=m+1$ and hence, $d_{m+1}(1\otimes b)=d_{m+1}(1 \otimes c)=t^{\frac{m+1}{2}}\otimes 1.$  The cohomology groups and cohomology algebra are same as in the case (i) when $r_2=m+1$ and $n< m=l.$\\ Finally, if $r_3=l+1,$ then we must have $n=m=l,$ and  $r_2=r_3=n+1$ and hence, $d_{n+1}(1\otimes b)=d_{n+1}(1 \otimes c)=t^{\frac{n+1}{2}}\otimes 1.$ The cohomology groups and cohomology algebra are same as in  the case (i) when $n=m=l.$
		\end{proof}

\begin{theorem}\label{thm 3.8}
	Let $G=\mathbb{S}^1$ act freely on a finitistic space $X \sim_\mathbb{Q} \mathbb{S}^n \times \mathbb{S}^m \times \mathbb{S}^l, $ where $n\leq m \leq l.$ If $d_{r_1}(1\otimes a)=d_{r_2}(1\otimes b)=0 $ and $d_{r_3}(1\otimes c) \neq 0,$ then  $H^*(X/G)$ is isomorphic to one of the following graded commutative algebras:
	\begin{enumerate}
		\item \label{1} $\mathbb{Q}[x,y,w,z]/I,$ where $I$ is homogeneous ideal given by: \\ $I=<Q(x), I_j,x^{\frac{l-m-n+1}{2}}z,c_0x^{\frac{l+n-m+1}{2}}w,c_1x^{\frac{l+m-n+1}{2}}y,c_2x^{\frac{l+1}{2}}y,c_3x^{\frac{l+1}{2}}w>_{1\leq j \leq 6},$\\
		where deg $x=2,$ deg $y=n$, deg $w=m~\&$ deg $z=n+m$ and $ I_1 =y^2+a_1x^{n}+a_2x^{\frac{n}{2}}y+a_3x^{\frac{2n-m}{2}}w+a_4z,
		I_2 =w^2+a_5x^{m}+a_6x^{\frac{2m-n}{2}}y+a_7x^{\frac{m}{2}}w+a_8x^{\frac{m-n}{2}}z,
		I_3= z^2+a_9x^{n+m} +a_{10}x^{\frac{n+2m}{2}}y+a_{11}x^{\frac{2n+m}{2}}w+a_{12}x^{\frac{n+m}{2}}z,
		I_4 =yw+a_{13}x^{\frac{n+m}{2}}+a_{14}x^{\frac{m}{2}}y+a_{15}x^{\frac{n}{2}}w+a_{16}z,
		I_5 =yz+a_{17}x^{\frac{2n+m}{2}}+a_{18}x^{\frac{n+m}{2}}y+a_{19}x^{n}w+a_{20}x^{\frac{n}{2}}z~\&~$ $
		I_6 =wz+a_{21}x^{\frac{2m+n}{2}}+a_{22}x^{m}y+a_{23}x^{\frac{n+m}{2}}w+a_{24}x^{\frac{m}{2}}z, a_i\in \mathbb{Q}, 1\leq i \leq 24;$  $a_4=0$ if $n<m, a_8=0$ if $l<2m+1,$ $a_{12}=0$ if $l<2m+2n+1,$   $a_{20}=0$ if $l<2n+m+1,$ and  $a_{24}=0$ if $l<n+2m+1;$   $Q(x)=x^{\frac{l+1+j'}{2}}, j'=n+m,m$ or $n.$\\
		If $j'=n+m,$ then either \{$c_0=c_1=1~\&~ c_2=c_3=0$ with $a_7=0$ if $n+l<2m+1,a_{10}=0$ if $l<2n+m+1,a_{11}=0$ if $l<n+2m+1~\&~a_{23}=0$ if $l<2m+1$\} or \{$c_0=c_1=0 ~\&~ c_2=c_3=1$ with $a_{10}=0$ if $l<2n+m+1,a_{11}=0$ if  $l<n+2m+1~\&~a_{22}=0$ if $l<2m+1$\}.\\
		If $j'=m,$ then $c_0=1~\&~ c_1=c_2=c_3=0$ with $a_7=0$ if $n+l<2m+1, a_9=0$ if $l<2n+m+1,a_{11}=0$ if $l<n+2m+1~\&~$$a_{23}=0$ if $l<2m+1.$\\
		If $j'=n,$ then $c_1=1~ \&~ c_0=c_2=c_3=0$ with $a_5=0$ if $n+l<2m+1,a_{9}=0$ if $l<2m+n+1,a_{10}=0$ if $l<2n+m+1~\&~$$a_{21}=0$ if $l<2m+1.$  
		\item $\mathbb{Q}[x,y,w,z]/<Q(x), I_j,c_0x^{\frac{m+l-n+1}{2}}y,x^{\frac{l-m+1}{2}}z,x^{\frac{l-m+1}{2}}w>_{1\leq j \leq 6},$\\ where deg $x=2,$ deg $y=n$, deg $w=m~\&$ deg $z=n+m$ and ${I_j}'s,  1\leq j \leq 6$ are same as in the possibility $(\ref{1}),$ with $a_3=0$ if $l<2n+1, a_4=0$ if $n < m,a_7=a_{24}=0 $ if $l<2m+1,a_{8}=0$ if $n+l<2m+1,a_{11}=0$ if $l<2n+2m+1,a_{12}=a_{23}=0$ if $l<n+2m+1,a_{15}=a_{20}=0$ if $l<m+n+1~\&~a_{19}=0$ if $l<2n+m+1,$ and  $Q(x)=x^{\frac{m+l+j'+1}{2}},j'=0 $ or $n.$\\
		If $j'=0,$ then $c_0=0 $ with $a_9=0$ if $l<2n+m,a_{10}=a_{21}=0$ if $l<n+m~\&~ a_{17}=0$ if  $l<2n.$\\ If $j'=n,$ then $c_0=1 $ with $ a_9=a_{22}=0$ if $l<n+m+1,a_{10}=0$ if $l<2n+m+1$$~\&~ a_{18}=0$ if  $l<2n+1.$ 
		\item   $\mathbb{Q}[x,y,w,z]/<Q(x), I_j,c_0x^{\frac{n+l-m+1}{2}}w,x^{\frac{l-n+1}{2}}y,x^{\frac{l-n+1}{2}}z>_{1\leq j \leq 6},$\\ where deg $x=2,$ deg $y=n$, deg $w=m~\&$ deg $z=n+m$ and  ${I_j}'s, 1\leq j \leq 6$ are same as in the possibility $(\ref{1}),$ with $a_2=a_{20}=0$ if $l<2n+1, a_4=0$ if $n < m,a_6=0 $ if $l<2m+1,a_{10}=0$ if $l<2n+2m+1,a_{14}=a_{24}=0$ if $l<n+m-1~\&$ $a_{18}=0$ if $l<2n+m+1$ and $Q(x)=x^{n+1+j'},j'=0 $ or $m.$ \\
		If $j'=0,$ then $c_0=0 $ with $ a_5=0$ if $n+l<2m+1,a_{9}=a_{12}=0$ if $l<2m+n+1,a_{11}=a_{17}=0$ if  $l<n+m+1,a_{21}=0$ if $l<2m+1~\&~ a_{22}=0$ if $l<2n+m+1.$\\  If $j'=m,$ then $c_0=1 $ with $a_{7}=0$ if $n+l<2m+1, a_9=a_{19}=0$ if $l<n+m-1,a_{11}=a_{22}=0$ if $l<2m+n+1,a_{12}=0$ if $l<2n+m+1~\&~ a_{23}=0$ if  $l<2m+1.$ 
		\item $\mathbb{Q}[x,y,w,z]/<x^{\frac{l+1}{2}}, I_j>_{1\leq j \leq 6},$ where $l$ odd, deg $x=2,$ deg $y=n$, deg $w=m~\&$ deg $z=n+m$ and ${I_j}'s, 1\leq j \leq 6$ are same as in the possibility $(\ref{1}),$ with $a_1=a_{19}=0$ if $l<2n+1,$ $a_4=0 $ if $ n<m, a_5=a_{22}=0$ if $l<2m+1,a_{6}=0$ if $n+l<2m+1, a_{9}=0$ if $l<2n+2m+1,$ $a_{10}=a_{21}=0$ if $l<2m+n+1,$ $a_{11}=a_{17}=0$ if $l<2n+m+1~\&~$$a_{12}=a_{13}=a_{18}=a_{23}=0$ if $l<n+m+1.$   
	\end{enumerate}
	
\end{theorem}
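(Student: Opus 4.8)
The plan is to run the Leray--Serre spectral sequence $\{E_r^{*,*},d_r\}$ of the Borel fibration $X\hookrightarrow X_G\to B_G$, exactly as in the proofs of Theorems~\ref{thm 3.6} and~\ref{thm 3.7}. By Proposition~\ref{2.4} we have $E_2^{k,i}=H^k(B_G)\otimes H^i(X)=\mathbb{Q}[t]\otimes\mathbb{Q}[a,b,c]/\langle a^2,b^2,c^2\rangle$ with $\deg t=2$. Since by hypothesis $d_{r_1}(1\otimes a)=d_{r_2}(1\otimes b)=0$, the classes $1\otimes a$ and $1\otimes b$ are permanent cocycles, so all transgressive behaviour originates from $1\otimes c$, and the Leibniz rule gives $d_r(1\otimes ac)=\pm(1\otimes a)\,d_r(1\otimes c)$, $d_r(1\otimes bc)=\pm(1\otimes b)\,d_r(1\otimes c)$, $d_r(1\otimes abc)=\pm(1\otimes ab)\,d_r(1\otimes c)$ on any page where the displayed factors are still alive. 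First I would split according to $r_3\in\{l-m-n+1,\,l-m+1,\,l-n+1,\,l+1\}$ (the list preceding the theorems), a degree count forcing $d_{r_3}(1\otimes c)=c_0\,t^{r_3/2}\otimes\xi$ with $\xi=ab,\,b,\,a,\,1$ in the four cases respectively, since the target must lie in fibre degree $l-r_3+1\in\{n+m,\,m,\,n,\,0\}$. These four cases produce, in order, possibilities (1), (2), (3) and (4).

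In each case $d_{r_3}$ annihilates some of the high classes $ac,bc,abc$ and truncates some of the columns of fibre degrees $0,n,m,n+m$. The next step is the freeness input: since $G$ acts freely, $X_G\simeq X/G$ by Proposition~\ref{mac}, so $H^k(X_G)=0$ for $k>n+m+l$ by Proposition~\ref{prop 4.5}; hence $E_\infty^{k,0}=0$ for $k\gg 0$. As $1\otimes a,1\otimes b$ never transgress and $1\otimes c$ has already done so, this forces a uniquely determined ``second wave'' of differentials supported by one of $1\otimes ac,1\otimes bc,1\otimes abc$, transgressing to the base row; its page is pinned down by the requirement that it (together with the $d_{r_3}$-image of $1\otimes c$) kill every sufficiently large power of $t$ and all of $ac,bc,abc$, and Leibniz applied in $E_{r_3+1}$ to $abc=a\cdot bc=b\cdot ac$ checks self-consistency and shows these are the only possibilities. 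The page number of this second transgression is exactly what produces the polynomial relation $Q(x)$ and the parameter $j'$ in possibilities (1)--(3); in possibility (4) ($r_3=l+1$, so $\xi=1$) the single differential $d_{l+1}$ already kills everything. Thus $E_N^{*,*}=E_\infty^{*,*}$ for a suitable finite $N$, with $E_\infty$ concentrated in fibre degrees $0,n,m,n+m$ over explicitly bounded ranges in $t$; summing $H^k(X_G)=\bigoplus_{p+q=k}E_\infty^{p,q}$ (the additive extensions being trivial over $\mathbb{Q}$) yields the cohomology-group tables, whose many sub-ranges reflect whether $n\le m\le l$ are strict and the parities of $n,m,l$.

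For the ring structure I would use that $i^*$ and $\pi^*$ are the edge homomorphisms: put $x:=\pi^*(t)\in H^2(X_G)$, which generates the image of $H^*(B_G)$, and recall that $\ker i^*$ is the ideal generated by $x$ (the filtration of $H^*(X_G)$ jumps by $2$ because $H^{\mathrm{odd}}(B_G)=0$). Since $1\otimes a,1\otimes b,1\otimes ab$ are permanent cocycles, choose $y,w,z\in H^*(X_G)$ with $i^*(y)=a$, $i^*(w)=b$, $i^*(z)=ab$; then $\deg y=n$, $\deg w=m$, $\deg z=n+m$ in every case. The relations $a^2=b^2=(ab)^2=0$, $a\cdot(ab)=b\cdot(ab)=0$ and $ab=a\cdot b$ in $H^*(X)$ then say that $y^2,\,w^2,\,z^2,\,yz,\,wz$ and $yw-z$ all lie in $\ker i^*=(x)$, hence each is a $\mathbb{Q}$-linear combination of monomials $x^e\cdot(\text{monomial in }y,w,z)$ of the correct total degree: this produces the relations $I_1,\dots,I_6$. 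The truncations of the $E_\infty$-columns give the remaining monomial relations $x^{e}y=x^{e}w=x^{e}z=0$ and $Q(x)=0$. Finally, a correction coefficient $a_i$ can be nonzero only when the term it multiplies has the right degree \emph{and} represents a nonzero class in $E_\infty$; comparing its $x$-exponent against the bounded $t$-ranges found above is exactly what yields the long list of conditions of the form ``$a_i=0$ if $l<\cdots$'' or ``$a_i=0$ if $n$ odd'', and the overlapping boundary situations ($n=m$, $m=l$, $l=m+n$, $n=m=l$, \dots) are reconciled with the cases already treated in Theorems~\ref{thm 3.6} and~\ref{thm 3.7}.

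The main obstacle is organizational rather than conceptual: there is no single hard step, but the case tree is large --- four choices of $r_3$, each refined by which of $ac,bc,abc$ carries the forced secondary transgression (this is the origin of the several values of $j'$), all further stratified by the strictness of $n\le m\le l$ and by parities, and at the very end the additive tables and the surviving correction terms must be read off consistently in every stratum. Ensuring that (i) the secondary differentials are the \emph{only} ones compatible with freeness and multiplicativity, (ii) each $E_\infty^{p,q}$ is correctly identified in every sub-range, and (iii) the ``right degree and nonzero in $E_\infty$'' test is applied uniformly to all of the correction coefficients, is where essentially all of the labour lies.
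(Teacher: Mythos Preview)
Your proposal is correct and follows essentially the same route as the paper: split on $r_3\in\{l-m-n+1,\,l-m+1,\,l-n+1,\,l+1\}$, use Leibniz plus freeness (Propositions~\ref{mac} and~\ref{prop 4.5}) to pin down the secondary transgressions of $ac,bc,abc$ (whence the parameters $j'$ and $c_i$), read off $E_\infty$ and Tot\,$E_\infty$, and then lift the generators $a,b,ab$ to $y,w,z$ with correction terms constrained by degree and by the truncation ranges. The paper carries out exactly this case tree, sub-case by sub-case; your observation that $\ker i^*=(x)$ is a clean way to phrase what the paper does implicitly when writing down the $I_j$'s.
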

\begin{proof}
		If $d_{r_1}(1\otimes a)=d_{r_2}(1\otimes b) = 0$ and  $d_{r_3}(1\otimes c) \neq 0 ,$ then we have following four cases: (i) $r_3=l-m-n+1,$ where $l-m-n$ odd (ii) $r_3=l-m+1,$ where $l-m$ odd (iii) $r_3=l-n+1,$ where $l-n$ odd and (iv) $r_3=l+1,$ where $l$ odd.\\
	{\bf Case (i):} $r_3=l-m-n+1,~l-m-n$ odd. \\ Clearly, $n \leq m <l$ and $n+m<l.$  We have  $d_{l-m-n+1}(1\otimes c)= c_0t^{\frac{l-m-n+1}{2}} \otimes ab,$ where $0 \neq c_0\in \mathbb{Q}.$\\ First, we assume that $d_{n+l-m+1}$ is nontrivial. So, we have $d_{n+l-m+1}(1\otimes ac)= c_1t^{\frac{n+l-m+1}{2}}\otimes b$ for some nonzero rational $c_1.$ Now, we have either $d_{m+l-n+1}(1\otimes bc)=0$ or $d_{m+l-n+1}(1\otimes bc)=c_2t^{\frac{m+l-n+1}{2}}\otimes a ,0\neq c_2\in \mathbb{Q}.$ \\Let $d_{m+l-n+1}(1\otimes ac)=c_2t^{\frac{m+l-n+1}{2}}\otimes a .$ So, we must have  $d_{n+m+l+1}(1\otimes abc)=c_3t^{\frac{n+m+l+1}{2}}\otimes 1, 0\neq c_3\in \mathbb{Q}.$ Thus $E_{n+m+l+2}^{*,*}=E_{\infty}^{*,*}.$ For $n<m< l,$ we have $E_{\infty}^{p,q}\cong \mathbb{Q},$ $ 0 \leq p(even) \leq n+m+l-1,q=0; 0 \leq p(even) \leq m+l-n-1,q=n; 0 \leq p(even) \leq n+l-m-1,q=m 
	$ and $0 \leq p(even) \leq l-m-n-1,q=n+m,$ and zero otherwise. For $n=m<l,$ we have $E_{\infty}^{p,q} \cong \mathbb{Q},$  $ 0 \leq p(even) \leq 2n+l-1,q=0$ and $ 0 \leq p(even) \leq l-2n-1,q=2n; E_{\infty}^{p,q} \cong \mathbb{Q} \oplus \mathbb{Q},$  $ 0 \leq p(even) \leq l-1,q=n,$ and  zero otherwise. \\ For $n<m<l,$ the cohomology groups are of $ X_G$ given by 
	\[
	H^k(X_G) =
	\begin{cases}
		\mathbb{Q}  & 0 \leq k(even) < n; m+l < k(even) < m+n+l,\\
		\mathbb{Q} \oplus \mathbb{Q} & n \leq k < m;n+l < k < m+l,n \mbox{~even},\\
		\mathbb{Q}\oplus \mathbb{Q}\oplus \mathbb{Q} & m \leq k < m+n; l< k \leq n+l-1,m~\mbox{even},\\
		\mathbb{Q}\oplus \mathbb{Q}\oplus \mathbb{Q}\oplus \mathbb{Q} & m+n \leq k \leq l-1,n~\&~m~\mbox{even}.\\
		0 & \mbox{otherwise,}
	\end{cases}
	\]
	and, for $n=m<l,$ we have
	\[
	H^k(X_G) =
	\begin{cases}
		\mathbb{Q}  & 0 \leq k < n; n+l < k \leq 2n+l, k~\mbox{even},\\
		\mathbb{Q}\oplus \mathbb{Q}\oplus \mathbb{Q} & n \leq k < 2n; l< k \leq n+l,k~\&~ n~\mbox{even},\\
		\mathbb{Q}\oplus \mathbb{Q}\oplus \mathbb{Q}\oplus \mathbb{Q} & 2n \leq k \leq l,k~\& ~n~\mbox{even},\\
		0 & \mbox{otherwise.}
	\end{cases}
	\]	The permanent cocycles $t\otimes 1, 1 \otimes a,$ $1 \otimes b$ and $1 \otimes ab$ of $E_2^{*,*}$ determine the elements   $x\in   E_{\infty}^{2,0},$ $u \in E_{\infty}^{0,n},$ $v \in E_{\infty}^{0,m}$  and $s \in E_{\infty}^{0,n+m},$ respectively. Thus the total complex \\Tot $E_{\infty}^{*,*}=\mathbb{Q}[x,u,v,s]/I,$ where ideal $I$ is given by \\  $<x^{\frac{n+m+l+1}{2}},u^2+\gamma_1v+\gamma_2s,v^2+\gamma_3s,s^2,uv+\gamma_4s,us,vs,x^{\frac{l+m-n+1}{2}}u,x^{\frac{l+n-m+1}{2}}v,x^{\frac{l-m-n+1}{2}}s>$ where  deg $x=2,$ deg $u= n,$ deg $v= m,$  deg $s= n+m,$ and ${\gamma_i \in \mathbb{Q}}, 1\leq i \leq 4;$ $\gamma_1=0$ if $m \neq 2n,$ and $\gamma_2=\gamma_3=0$ if $n < m.$ Let $y \in H^n(X_G), w \in H^{m}(X_G)$ and $ z \in H^{n+m}(X_G)$ such that $i^*(y)=a,i^*(w)=b,$ and $i^*(z)=ab,$ respectively. Clearly,
	$I_1 =y^2+b_1x^{n}+b_2x^{\frac{n}{2}}y+b_3x^{\frac{2n-m}{2}}w+b_4z =0,
	I_2 =w^2+b_5x^{m}+b_6x^{\frac{2m-n}{2}}y+b_7x^{\frac{m}{2}}w+b_8x^{\frac{m-n}{2}}z =0,
	I_3 = z^2+b_9x^{n+m} +b_{10}x^{\frac{n+2m}{2}}y+b_{11}x^{\frac{2n+m}{2}}w+b_{12}x^{\frac{n+m}{2}}z =0,
	I_4 =yw+b_{13}x^{\frac{n+m}{2}}+b_{14}x^{\frac{m}{2}}y+b_{15}x^{\frac{n}{2}}w+b_{16}z =0,
	I_5 =yz+b_{17}x^{\frac{2n+m}{2}}+b_{18}x^{\frac{n+m}{2}}y+b_{19}x^{n}w+b_{20}x^{\frac{n}{2}}z =0~\&~
	I_6 =wz+b_{21}x^{\frac{2m+n}{2}}+b_{22}x^{m}y+b_{23}x^{\frac{n+m}{2}}w+b_{24}x^{\frac{m}{2}}z =0, 
	$
	where $b_i \in \mathbb{Q}, 1 \leq i \leq 24;$  $ b_4=0$ if $n< m , b_6=0$ if $l<2m+1,b_7=0$ if   $2m+1>n+l,b_8=b_{23}=0$ if $l<2m+1 , b_{10}=b_{20}=0$ if $l<2n+m+1, b_{11}=b_{24}=0$ if $l<n+2m+1,$ and $b_{12}=0$ if $l<2n+2m+1.$ Thus the cohomology ring $H^*(X_G)$ is given by	$$\mathbb{Q}[x,y,w,z]/<x^{\frac{n+m+l+1}{2}}, I_j,
	x^{\frac{l-m-n+1}{2}}z,x^{\frac{l+n-m+1}{2}}w,x^{\frac{l+m-n+1}{2}}y>_{1\leq j \leq 6},$$
	where deg $x=2,$ deg $y=n$, deg $w=m,$ and deg $z=n+m.$ This realizes possibility (1) when $j'=n+m$ with $c_0=c_1=1$ and $c_2=c_3=0.$\\ 
	Now, let $d_{m+l-n+1}(1\otimes ac)=0.$ Then  we must  have $d_{m+l+1}(1 \otimes bc)=c_1t^{\frac{m+l+1}{2}}\otimes 1$ and $d_{m+l+1}(1 \otimes abc)=c_2t^{\frac{m+l+1}{2}}\otimes a.$ Thus $E_{m+l+2}^{*,*}=E_{\infty}^{*,*}.$ For $n<m<l,$ we have  $E_{\infty}^{p,q}\cong \mathbb{Q},$  $ 0 \leq p(even) < m+l,q=0,n; 0 \leq p(even) < n+l-m,q=m$ and $ 0 \leq p(even) < l-m-n,q=n+m,$ and zero otherwise. For $n=m<l,$ we have $E_{\infty}^{p,q}\cong \mathbb{Q},$  $ 0 \leq p(even) < n+l,q=0; 0 \leq p(even) < l-2n,q=2n;l < p(even) < n+l,q=n; E_{\infty}^{p,q}\cong \mathbb{Q} \oplus \mathbb{Q},$  $ 0 \leq p(even) < l,q=n,$ and zero otherwise. Note that the cohomology groups are same as above. The total complex is Tot $ E_{\infty}^{*,*}=\mathbb{Q}[x,u,v,s]/I,$ where $I$ is an ideal given by $$<x^{\frac{m+l+1}{2}},u^2+\gamma_1v+\gamma_2s,v^2+\gamma_3s,s^2,uv+\gamma_4s,us,vs,x^{\frac{l+n-m+1}{2}}v,x^{\frac{l-m-n+1}{2}}s>, $$ where deg $x=2,$ deg $u= n,$ deg $v= m~\&$   deg $s= n+m,$ and ${\gamma_i \in \mathbb{Q}}, 1\leq i \leq 4;$ $\gamma_1=0$ if $m \neq 2n,$ and $\gamma_2=\gamma_3=0$ if $n < m.$  The ideals ${I_j}'s, 1 \leq j \leq 6$ are also same as above with conditions: $b_{4}=0$ if $n<m,$ $b_{7}=0$ if $2m+1>n+l,$ $b_{8}=b_{23}=0$ if $l<2m+1,$ $b_{9}=0$ if $l<2n+m+1,$ $b_{11}=0$ if $l<n+2m+1,$ $b_{12}=0$ if $l<2n+2m+1,$ $b_{20}=0$ if $l<2n+m+1,$ and $b_{24}=0$ if $l<n+2m+1.$ Thus the cohomology ring $H^*(X_G)$ is given by $$\mathbb{Q}[x,y,w,z]/<x^{\frac{m+l+1}{2}}, I_j,
	x^{\frac{l-m-n+1}{2}}z,x^{\frac{l+n-m+1}{2}}w>_{1\leq j \leq 6},$$
	where deg $x=2,$ deg $y=n$, deg $w=m,$ and  deg $z=n+m.$ This realizes possibility (1) when $j'=m.$ \\
	Now, assume that  $d_{n+l-m+1}$ is trivial. We have either $d_{l+1}(1\otimes ac)=0$ or  $d_{l+1}(1 \otimes ac)= c_1t^{\frac{l+1}{2}}\otimes a.$\\ Let $d_{l+1}(1 \otimes ac)= c_1t^{\frac{l+1}{2}}\otimes a.$ Then $d_{l+1}(1 \otimes bc)= c_2t^{\frac{l+1}{2}}\otimes b,$ and we must have    $d_{n+m+l+1}(1 \otimes abc)= t^{\frac{n+m+l+1}{2}}\otimes 1.$ Thus $E_{n+m+l+2}^{*,*}=E_{\infty}^{*,*}.$ For $n<m<l,$ we have  $E_{\infty}^{p,q}\cong  \mathbb{Q},$  $ 0 \leq p(even) < n+m+l,q=0; 0 \leq p(even) < l,q=n,m$ and $ 0 \leq p(even) < l-m-n,q=n+m,$ and zero otherwise. For $n=m<l,$ we have $E_{\infty}^{p,q}\cong \mathbb{Q},$ $ 0 \leq p(even) < 2n+l,q=0; 0 \leq p(even) < l-2n,q=2n; E_{\infty}^{p,q}\cong \mathbb{Q} \oplus \mathbb{Q},$ $ 0 \leq p(even) < l,q=n,$ and zero otherwise. The cohomology groups of $X_G$ are same as above. The total complex is Tot$E_{\infty}^{*,*}=\mathbb{Q}[x,u,v,s]/I,$ where $I$ is an ideal given by\begin{center}
		$<x^{\frac{n+m+l+1}{2}},u^2+\gamma_1v+\gamma_2s,v^2+\gamma_3s,s^2,uv+\gamma_4s,us,vs,x^{\frac{l+1}{2}}u,x^{\frac{l+1}{2}}v,x^{\frac{l-m-n+1}{2}}s>,$	
	\end{center}  where  deg $x=2,$ deg $u= n,$ deg $v= m~\&$   deg $s= n+m,$ and ${\gamma_i \in \mathbb{Q}}, 1\leq i \leq 4;$ $\gamma_1=0$ if $m \neq 2n,$ and $\gamma_2=\gamma_3=0$ if $n<m.$  The ideals ${I_j}'s; 1 \leq j \leq 6$ are same as above with  conditions: $b_4=0 $ if $n<m, b_8=0$ if $l<2m+1,b_{10}=b_{24}=0$ if $l<n+2m+1,$ $b_{11}=b_{20}=0$ if $l<2n+m+1,$ $b_{12}=0 $ if $l<2n+2m+1,$  and $b_{22}=0$ if $l<2m+1.$  So, the cohomology ring $H^*(X_G)$ is given by $$\mathbb{Q}[x,y,w.z]/<x^{\frac{n+m+l+1}{2}}, I_j,
	x^{\frac{l-m-n+1}{2}}z,x^{\frac{l+1}{2}}y,x^{\frac{l+1}{2}}w>_{1\leq j \leq 6}$$
	where deg $x=2,$ deg $y=n$, deg $w=m,$ and deg $z=n+m.$ This realizes possibility (1) when $j'=n+m$ with $c_0=c_1=0$ and $c_2=c_3=1.$\\
	Now, let $d_{l+1}(1\otimes ac)=0.$ Then we must have  $d_{m+l-n+1}(1\otimes bc)=c_1 t^{\frac{m+l-n+1}{2}}\otimes a.$ Consequently, we get   $d_{n+l+1}(1 \otimes ac)=c
	_2t^{\frac{n+l+1}{2}}\otimes 1$ and  $d_{n+l+1}(1 \otimes abc)=c_2t^{\frac{n+l+1}{2}}\otimes b.$ Thus $E_{n+l+2}^{*,*}=E_{\infty}^{*,*}.$ For $n\leq m<l,$ we have  $E_{\infty}^{p,q}\cong \mathbb{Q},$ $ 0 \leq p(even) < n+l,q=0,m; 0 \leq p(even) < m+l-n,q=n;$ $ 0 \leq p(even) < l-m-n,q=n+m,$ and zero otherwise. The cohomology groups of $X_G$ are same as above. The total complex Tot$E_{\infty}^{*,*}$ is given by $$\mathbb{Q}[x,u,v,s]/<x^{\frac{n+l+1}{2}},u^2+\gamma_1v+\gamma_2s,v^2+\gamma_3s,s^2,uv+\gamma_4s,us,vs,x^{\frac{l+m-n+1}{2}}u,x^{\frac{l-m-n+1}{2}}s>$$  where deg $x=2,$ deg $u= n,$ deg $v= m,$   deg $s= n+m,$ and ${\gamma_i \in \mathbb{Q}}, 1\leq i \leq 4;$ $\gamma_1=0$ if $m \neq 2n,$  and $\gamma_2=\gamma_3=0$ if $m \neq n.$ The ideals ${I_j}'s, 1 \leq j \leq 6$ are same as above  with  conditions: $b_4=0 $ if $n<m, b_5=0$ if $n+l<2m+1,b_{8}=b_{21}=0$ if $l<2m+1,b_{9}=b_{24}=0$ if $l<n+2m+1,$ $b_{10}=0$ if $l<2n+m+1,$  $b_{12}=0$ if $l<2n+2m+1,$ and $b_{20}=0$ if $l<2n+m+1.$ Thus the cohomology ring $H^*(X_G)$ is given by $$\mathbb{Q}[x,y,w,z]/<x^{\frac{n+l+1}{2}}, I_j,
	x^{\frac{l-m-n+1}{2}}z,x^{\frac{m+l-n+1}{2}}y,>_{1\leq j \leq 6}$$
	where deg $x=2,$ deg $y=n$, deg $w=m,$ deg $z=n+m.$ This realizes possibility (1) when $j'=n.$\\
		{\bf Case (ii):} $r_3=l-m+1,$ where $l-m$ odd.\\ Clearly,  $n \leq m <l.$ We have   $d_{l-m+1}(1\otimes c)= c_0t^{\frac{l-m+1}{2}} \otimes b $ and $d_{l-m+1}(1\otimes ac)= c_0t^{\frac{l-m+1}{2}} \otimes ab. $ Now, we have either $d_{m+l-n+1}(1\otimes bc)=0$ or $d_{m+l-n+1}(1 \otimes bc)= c_1t^{\frac{m+l-n+1}{2}} \otimes a.$\\ First, let $d_{m+l-n+1}(1 \otimes bc)= c_1t^{\frac{m+l-n+1}{2}} \otimes a.$  It is clear that  $ d_{n+m+l+1}(1 \otimes abc)= c_2t^{\frac{n+m+l+1}{2}}\otimes 1 .$ Thus $E_{n+m+l+2}^{*,*}=E_{\infty}^{*,*}.$ For $n<m<l,$ we have  $E_{\infty}^{p,q}\cong\mathbb{Q},$  $ 0 \leq p \leq n+m+l-1,q=0; 0 \leq p \leq m+l-n-1,q=n;$  $ 0 \leq p \leq l-m-1,q=m,n+m,$ and zero otherwise. For $n=m<l,$ we have $E_{\infty}^{p,q}\cong \mathbb{Q},$  $ 0 \leq p \leq 2n+l-1,q=0; 0 \leq p \leq l-n-1,q=2n,l-n-1 < p \leq l-1,q=n;  E_{\infty}^{p,q}\cong \mathbb{Q} \oplus \mathbb{Q},$  $ 0 \leq p \leq l-n-1,q=n,$ and zero otherwise.\\  For $n<m<l<n+m,$ the cohomology groups of $X_G$ are given by 
	\[
	H^k(X_G)=
	\begin{cases}
		\mathbb{Q}  & 0 \leq k <n ; m+l \leq k \leq m+n+l,\\
		\mathbb{Q} \oplus \mathbb{Q} & n \leq k < m;l-1<k<n+m-1;n+l-1 < k \leq m+l-1, \\
		\mathbb{Q} \oplus \mathbb{Q} \oplus \mathbb{Q} & m+j \leq k \leq l+j-1,j=0,n,\\
		0 & \mbox{otherwise,}
	\end{cases}
	\] and, for $n=m<l<2n,$ we have 
	\[
	H^k(X_G)=
	\begin{cases}
		\mathbb{Q}  & 0 \leq k < n; n+l \leq k \leq 2n+l,\\
		\mathbb{Q} \oplus \mathbb{Q} & l\leq k<2n, \\
		\mathbb{Q} \oplus \mathbb{Q} \oplus \mathbb{Q} & n+j \leq k \leq l+j-1,j=0,n,\\
		0 & \mbox{otherwise.}
	\end{cases}
	\]  For $l \geq n+m,$ the cohomology groups are similar to the case (i). 
	The total complex is Tot$E_{\infty}^{*,*} = \mathbb{Q}[x,u,v,s]/I ,$ where $I$ is an ideal given by \begin{center}
		$	<x^{\frac{n+m+l+1}{2}},u^2+\gamma_1v+\gamma_2s,v^2+\gamma_3s,s^2,uv+\gamma_4s,us,vs,x^{\frac{l+m-n+1}{2}}u,x^{\frac{l-m+1}{2}}v,x^{\frac{l-m+1}{2}}s>,$
	\end{center}  where  deg $x=2,$ deg $u= n,$ deg $v= m,$   deg $s= n+m,$ and ${\gamma_i \in \mathbb{Q}},$ $ 1\leq i \leq 4;$ $\gamma_1=0$ if $m \neq 2n,$ and $\gamma_2=\gamma_3=0$ if $n<m.$ The ideals ${I_j}'s, 1 \leq j \leq 6$ are same as in the case (i)  with  conditions: $b_3=b_{18}=0$ if $l<2n+1,$ $b_4=0 $ if $ n<m, b_7=b_{24}=0$ if $l<2m+1,b_{8}=0$ if $n+l<2m+1,b_{9}=b_{15}=b_{20}=b_{22}=0$ if $l<n+m-1,$ $b_{10}=b_{19}=0$ if $l<2n+m+1, b_{11}=0$ if $l<2n+2m+1,$ and $b_{12}=b_{23}=0$ if $l<n+2m+1.$ Thus the cohomology ring $H^*(X_G)$ is given by $$\mathbb{Q}[x,y,w.z]/<x^{\frac{n+m+l+1}{2}}, I_j,
	x^{\frac{m+l-n+1}{2}}y,x^{\frac{l-m+1}{2}}w,x^{\frac{l-m+1}{2}}z>_{1\leq j \leq 6}$$
	where deg $x=2,$ deg $y=n$, deg $w=m,$ and deg $z=n+m.$ This realizes possibility (2) when $j'=n.$\\
	Now, let  $d_{m+l-n+1}(1\otimes bc)=0.$ Then  we must have  $d_{m+l+1}(1 \otimes bc)=c_1t^{\frac{m+l+1}{2}}\otimes 1$ and $d_{m+l+1}(1 \otimes abc)=c_2t^{\frac{m+l+1}{2}}\otimes a.$ Thus $E_{m+l+2}^{*,*}=E_{\infty}^{*,*}.$ For $n<m<l,$ we have  $E_{\infty}^{p,q}\cong \mathbb{Q},$ if $ 0 \leq p \leq m+l-1,q=0,n; 0 \leq p \leq l-m-1,q=m,n+m,$ and  zero otherwise. For $n=m<l,$ we have $E_{\infty}^{p,q}\cong \mathbb{Q},$ $ 0 \leq p \leq n+l-1,q=0; l-n-1 < p \leq n+l-1,q=n;0 \leq p \leq l-n-1,q=n+m;  E_{\infty}^{p,q}\cong \mathbb{Q} \oplus \mathbb{Q},$ $ 0 \leq p \leq l-n-1,q=n,$ and zero otherwise. For $l<n+m,$ the cohomology groups are same as in the case (ii) when  $d_{m+l-n+1}$ is nontrivial, and  for $l \geq n+m,$  the cohomology groups are similar to the case (i). 
	Thus the total complex is Tot $E_{\infty}^{*,*}=\mathbb{Q}[x,u,v,s]/I,$ where  $I$ is an ideal given by  \begin{center}
		$<x^{\frac{m+l+1}{2}},u^2+\gamma_1v+\gamma_2s,v^2+\gamma_3s,s^2,uv+\gamma_4s,us,vs,x^{\frac{l-m+1}{2}}v,x^{\frac{l-m+1}{2}}s>,$
	\end{center} where   deg $x=2,$ deg $u= n,$ deg $v= m,$   deg $s= n+m,$ and ${\gamma_i \in \mathbb{Q}},$ $ 1\leq i \leq 4;$ $\gamma_1=0$ if $m \neq 2n,$ and $\gamma_2=\gamma_3=0$ if $n<m.$ The ideals ${I_j}'s, 1 \leq j \leq 6$ are same as in the case (i)  with  conditions: $b_3=b_{17}=0$ if $l<2n+1,b_4=0$ if $n<m, b_7=b_{24}=0$ if $l<2m+1,b_8=0$ if $n+l<2m+1,b_9=b_{19}=0$ if $l<2n+m+1, b_{10}=b_{15}=b_{20}=b_{21}=0$ if $l<m+n+1,b_{11}=0$ if $l<2n+2m+1,$ $b_{12}=b_{23}=0$ if $l<2m+n+1,$ and $b_{17}=0$ if $l<2n+1.$  Thus the cohomology ring $H^*(X_G)$ is given by $$\mathbb{Q}[x,y,w,z]/<x^{\frac{m+l+1}{2}}, I_j,
	x^{\frac{l-m+1}{2}}w,x^{\frac{l-m+1}{2}}z>_{1\leq j \leq 6},$$
	where deg $x=2,$ deg $y=n,$ deg $w=m$ and  deg $z=n+m.$ This realizes possibility (2) when $j'=0.$\\
		{\bf Case (iii):}  $r_3=l-n+1,$ where $l-n$ odd. \\Clearly, $n<l.$ We have  $d_{l-n+1}(1\otimes c)= c_0t^{\frac{l-n+1}{2}} \otimes a $ and $d_{l-n+1}(1\otimes bc)= c_0t^{\frac{l-n+1}{2}} \otimes ab.$ We have either  $d_{n+l-m+1}(1\otimes ac)=0$ or $d_{n+l-m+1}(1 \otimes ac)=c_1t^{\frac{n+l-m+1}{2}}\otimes b.$\\  First, let   $d_{n+l-m+1}(1 \otimes ac)=c_1t^{n+l-m+1}\otimes b.$ Then we  must have $d_{n+m+l+1}(1 \otimes abc)= c_2t^{n+m+l+1}\otimes 1.$ Thus $E_{n+m+l+2}^{*,*}=E_{\infty}^{*,*}.$ For $n<m\leq l,$ we have  $E_{\infty}^{p,q}\cong \mathbb{Q}$ if $ 0 \leq p \leq n+m+l-1,q=0; 0 \leq p \leq l-n-1,q=n,n+m; 0 \leq p \leq n+l-m-1,q=m,$ and  zero otherwise. For $n=m<l,$ we have $E_{\infty}^{p,q}\cong \mathbb{Q}$ if $ 0 \leq p \leq 2n+l-1,q=0; l-n-1 < p \leq l-1,q=n;0 \leq p \leq l-n-1,q=2n; E_{\infty}^{p,q}\cong\mathbb{Q} \oplus \mathbb{Q}$ if $ 0 \leq p \leq l-n-1,q=n,$ and zero otherwise. For $l \geq n+m,$ the cohomology groups are similar to the case (i),  and  for $l<n+m,$ the cohomology groups are same as in the  case (ii). Thus the total complex is Tot$E_{\infty}^{*,*}$ = $\mathbb{Q}[x,u,v,s]/I,$ where $I$ is an ideal given by \begin{center}
		$<x^{\frac{n+m+l+1}{2}},u^2+\gamma_1v+\gamma_2s,v^2+\gamma_3s,s^2,uv+\gamma_4s,us,vs,x^{\frac{l-n+1}{2}}u,x^{\frac{n+l-m+1}{2}}v,x^{\frac{l-n+1}{2}}s>,$
	\end{center}  where deg $x=2,$ deg $u= n,$ deg $v= m,$  deg $s= n+m,$ and ${\gamma_i \in \mathbb{Q}},$ $ 1\leq i \leq 4;$ $\gamma_1=0$ if $m \neq 2n,$ and $\gamma_2=\gamma_3=0$ if $n < m.$  Clearly, the ideals $I_j, 1 \leq j \leq 6$ are same as in the case (i) with  conditions: $b_2=b_{20}=0$ if $l<2n+1,$ $b_4=0 $ if $ n<m, b_6=b_{23}=0$ if $l<2m+1,b_{7}=0$ if $n+l<2m+1,b_{9}=b_{14}=b_{19}=b_{24}=0$ if $l<n+m-1,b_{10}=0$ if $l<2n+2m+1,$ $b_{11}=b_{22}=0$ if $l<2m+n+1$ and $b_{12}=b_{18}=0$ if $l<2n+m+1.$    Thus the cohomology ring $H^*(X_G)$ is given by $$\mathbb{Q}[x,y,w.z]/<x^{\frac{n+m+l+1}{2}}, I_j,
	x^{\frac{l-n+1}{2}}y,x^{\frac{n+l-m+1}{2}}w,x^{\frac{l-n+1}{2}}z>_{1\leq j \leq 6}$$
	where deg $x=2,$ deg $y=n$, deg $w=m,$ and deg $z=n+m.$ This realizes possibility (3) when $j'=m.$\\
	Now, let   $d_{n+l-m+1}(1\otimes ac)=0.$ Then we must have  $d_{n+l+1}(1 \otimes ac)=c_1t^{m+l+1}\otimes 1$ and $d_{n+l+1}(1 \otimes abc)=c_1t^{m+l+1}\otimes b.$ Thus $E_{n+l+2}^{*,*}=E_{\infty}^{*,*}.$ For $n<m\leq l,$ we have  $E_{\infty}^{p,q}\cong \mathbb{Q},$ if $ 0 \leq p \leq n+l-1,q=0,m; 0 \leq p \leq l-n-1,q=n,n+m,$ and  zero otherwise. For $n=m<l,$ we have $E_{\infty}^{p,q}\cong \mathbb{Z}_2,$ if $ 0 \leq p \leq n+l-1,q=0; l-n-1 < p \leq n+l-1,q=n;0 \leq p \leq l-n-1,q=2n;  E_{\infty}^{p,q}\cong \mathbb{Q} \oplus \mathbb{Q}$ if $ 0 \leq p \leq l-n-1,q=n,$ and zero otherwise. For $l \geq n+m,$ the cohomology groups are similar to the case (i), and for $l<n+m,$ the cohomology groups are same as in the case (ii). Thus the total complex is Tot $E_{\infty}^{*,*}$ = $\mathbb{Q}[x,u,v,s]/I,$ where  $I$ is an ideal given by \begin{center}
		$<x^{\frac{n+l+1}{2}},u^2+\gamma_1v+\gamma_2s,v^2+\gamma_3s,s^2,uv+\gamma_4s,us,vs,x^{\frac{l-n+1}{2}}u,x^{\frac{l-n+1}{2}}s>,$
	\end{center} where  deg $x=2,$ deg $u= n,$ deg $v= m,$  deg $s= n+m,$ and ${\gamma_i \in \mathbb{Q}},$ $ 1\leq i \leq 4;$ $\gamma_1=0$ if $m \neq 2n,$ and $\gamma_2=\gamma_3=0$ if $ n<m.$  The ideals ${I_j}'s; 1 \leq j \leq 6$ are same as in the case (i) with  conditions: $b_2=b_{20}=0$ if $l<2n+1,$ $b_4=0 $ if $ n<m, $ $b_{5}=0$ if $n+l<2m+1,$ $b_6=b_{21}=0$ if $l<2m+1,$ $b_{9}=b_{12}=0$ if $l<2m+n+1,$  $b_{10}=0$ if  $l<2n+2m+1,b_{11}=b_{14}=b_{17}=b_{24}=0$ if $l<n+m+1$ and $b_{18}=b_{22}=0$ if $l<2n+m+1.$   Thus the cohomology ring $H^*(X_G)$ is given by $$\mathbb{Q}[x,y,w,z]/<x^{\frac{n+l+1}{2}}, I_j,
	x^{\frac{l-n+1}{2}}y,x^{\frac{l-n+1}{2}}z>_{1\leq j \leq 6},$$
	where deg $x=2,$ deg $y=n,$ deg $w=m$ and deg $z=n+m.$ This realizes possibility (3) when $j'=0.$\\
	{\bf Case (iv):}  $r_3=l+1,$ where $l$ is odd.\\ We have  $d_{l+1}(1\otimes c)= c_0t^{\frac{l+1}{2}} \otimes 1. $  Consequently,   $d_{l+1}(1\otimes ac)= c_0t^{\frac{l+1}{2}} \otimes a,$ $d_{l+1}(1\otimes bc)= c_0t^{\frac{l+1}{2}} \otimes b$ and  $d_{l+1}(1\otimes abc)= c_0t^{\frac{l+1}{2}} \otimes ab.$ Thus $E_{l+2}^{*,*}=E_{\infty}^{*,*}.$ For $n<m\leq l ,$ we have  $E_{\infty}^{p,q}\cong \mathbb{Q},$ where $ 0 \leq p \leq l-1,q=0,n,m,n+m,$ and  zero otherwise. For $n=m\leq l,$ we have $E_{\infty}^{p,q}\cong \mathbb{Q},$  $ 0 \leq p \leq l-1,q=0,2n;  E_{\infty}^{p,q}\cong \mathbb{Q} \oplus \mathbb{Q},$  $ 0 \leq p \leq l-1,q=n,$ and  zero otherwise. For $l \geq n+m,$ the cohomology groups are similar to the case (i), and  for $l<n+m,$ the cohomology groups are same as in the case (ii). The total complex is Tot $E_{\infty}^{*,*}$ = $\mathbb{Q}[x,u,v,s]/I,$ where  $I$ is an ideal given by \begin{center}
		$<x^{\frac{l+1}{2}},u^2+\gamma_1v+\gamma_2s,v^2+\gamma_3s,s^2,uv+\gamma_4s,us,vs>,$
	\end{center} where  deg $x=2,$ deg $u= n,$ deg $v= m,$  deg $s= n+m,$ and ${\gamma_i \in \mathbb{Q}},$ $ 1\leq i \leq 4;$ $\gamma_1=0$ if $m \neq 2n,$ and $\gamma_2=\gamma_3=0$ if $ n<m.$ The ideals ${I_j}'s; 1 \leq j \leq 6$ are same as in case (i) with conditions:
	$b_1=b_{19}=0$ if $l<2n+1,$ $b_4=0 $ if $ n<m, b_5=b_{22}=0$ if $l<2m+1,b_{6}=0$ if $n+l<2m+1, b_{9}=0$ if $l<2n+2m+1,$ $b_{10}=b_{21}=0$ if $l<2m+n+1,$ $b_{11}=b_{17}=0$ if $l<2n+m+1,$ and $b_{12}=b_{13}=b_{18}=b_{23}=0$ if $l<n+m+1.$ Thus the cohomology ring $H^*(X_G)$ is given by $$\mathbb{Q}[x,y,w.z]/<x^{\frac{l+1}{2}}, I_j>_{1\leq j \leq 6},$$
	where deg $x=2,$ deg $y=n$, deg $w=m$ and deg $z=n+m.$ This realizes possibility (4).	
		\end{proof}
		\begin{example}
		An example of case (1) of Theorem \ref{thm 3.6} can be realized by considering  diagonal action of $G=\mathbb{S}^1$ on $\mathbb{S}^n \times \mathbb{S}^m \times \mathbb{S}^l,$ where $\mathbb{S}^1$ acts freely on $\mathbb{S}^n$ and trivially on both $\mathbb{S}^m$ and $\mathbb{S}^l.$  Then $X/G \sim_\mathbb{Q} \mathbb{CP}^{\frac{n-1}{2}} \times \mathbb{S}^m \times \mathbb{S}^l.$ This realizes case (1) by taking $a_4=1$ and $a_i =0$ for $i\neq 4.$ Similarly, case (2) of Theorem  \ref{thm 3.7}, with $a_6=1$ and $a_i=0$ for $i\neq 6,$  and case (4) of Theorem \ref{thm 3.8}, with $a_{16}=1$ and $a_i=0$ for $i\neq 16,$  can be realized.
	\end{example}
		\section{Applications}
	In this section,  we derive the Borsuk-Ulam type results for free $G=\mathbb{S}^1$ actions on finitistic space $X \sim_\mathbb{Q} \mathbb{S}^n \times \mathbb{S}^m \times \mathbb{S}^l ,1\leq n \leq m \leq l.$ We determine the nonexistence of $G$-equivariant maps between  $ X$ and $ \mathbb{S}^{2k+1},$ where $\mathbb{S}^{2k+1}$ equipped with  the standard free $G$ action namely, componentwise multiplication.\\
	\indent Recall that \cite{floyd} the index (respectively, co-index) of a $G$-space $X$ is the greatest integer $k$ (respectively, the lowest integer $k$)  such that there exists a $G$-equivariant map $\mathbb{S}^{2k+1} \rightarrow X$ (respectively, $ X \rightarrow  \mathbb{S}^{2k+1}$).\\
	\indent	By  Theorems proved in Section 3, we get the largest integer  $s=\frac{r-1}{2}$ for which $w^s \neq 0,$ where $w \in H^{2}(X/G)$ is the  characteristic class of the principle $G$-bundle $G \hookrightarrow X \rightarrow X/G,$ and  $r$ is one of the following: $ n,m,l,n+m,2n+l,n+l,m+l$ or $n+m+l,$ We know that index$(X)\leq s$ \cite{floyd}. Thus we have the following Result:
		\begin{theorem}
		Let $G=\mathbb{S}^1$ act freely on a  finitistic space $X \sim_\mathbb{Q}\mathbb{S}^n \times \mathbb{S}^m \times \mathbb{S}^l,$  $1\leq n \leq m \leq l.$ Then there does not exist $G$-equivarient map from $\mathbb{S}^{2k+1} \rightarrow X,$ for $k > \frac{r-1}{2},$ where $r$ is one of the following: $ n,m,l,n+m,2n+l,n+l,m+l$ or $n+m+l.$
	\end{theorem}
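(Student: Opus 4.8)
The plan is to obtain this as a direct consequence of the ring computations of Section~3, via the standard obstruction coming from the characteristic class of the orbit bundle. First I would fix notation: since $G=\mathbb{S}^1$ acts freely on $X$, the quotient map $p\colon X\to X/G$ is a principal $\mathbb{S}^1$-bundle; let $w\in H^2(X/G;\mathbb{Q})$ be its rational Euler (characteristic) class. By Proposition~\ref{mac} the homotopy equivalence $X/G\simeq X_G$ carries the classifying map $X/G\to B_G=\mathbb{CP}^\infty$ to the Borel projection $\pi\colon X_G\to B_G$, so $w=\pi^*(t)$, where $t$ generates $H^*(B_G;\mathbb{Q})=\mathbb{Q}[t]$. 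Under this identification $w$ is precisely the degree-$2$ generator denoted $x$ in each of the presentations of $H^*(X/G)$ produced in Theorems~\ref{thm 3.6},~\ref{thm 3.7} and~\ref{thm 3.8}.

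Next I would determine the largest power of $w=x$ that survives in $H^*(X/G)$. Inspecting the presentations case by case, the unique relation involving $x$ alone is always of the shape $x^{\frac{r+1}{2}}=0$ --- appearing as $x^{\frac{n+1}{2}}$, $x^{\frac{m+1}{2}}$, $x^{\frac{l+1}{2}}$, or as the factor $Q(x)=x^{\frac{r+1}{2}}$ --- while $1,x,\dots,x^{\frac{r-1}{2}}$ remain $\mathbb{Q}$-linearly independent; running through the cases, $r$ takes exactly the values $n,\ m,\ l,\ n+m,\ 2n+l,\ n+l,\ m+l,\ n+m+l$. Hence $s:=\max\{k:\,w^k\neq 0\}=\tfrac{r-1}{2}$ for the corresponding value of $r$.

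Then I would run the co-index argument. Suppose there is a $G$-equivariant map $f\colon\mathbb{S}^{2k+1}\to X$, where $\mathbb{S}^{2k+1}$ carries the standard free $\mathbb{S}^1$-action. Passing to orbit spaces gives $\bar f\colon\mathbb{CP}^k=\mathbb{S}^{2k+1}/G\to X/G$, and since $f$ is a morphism of principal $\mathbb{S}^1$-bundles, $\bar f$ classifies the induced bundle over $\mathbb{CP}^k$; equivalently $\bar f$ followed by $X/G\to B_G$ is the standard inclusion $\mathbb{CP}^k\hookrightarrow\mathbb{CP}^\infty$. Therefore $\bar f^*(w)$ is the positive generator $g$ of $H^2(\mathbb{CP}^k;\mathbb{Q})$, and $0\neq g^k=\bar f^*(w^k)$ forces $w^k\neq 0$, i.e.\ $k\le s=\tfrac{r-1}{2}$; this is exactly the estimate index$(X)\le s$ of Floyd~\cite{floyd}. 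The contrapositive gives the theorem.

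The argument is short because all the genuine difficulty is already absorbed into Theorems~\ref{thm 3.6}--\ref{thm 3.8}; I expect the only delicate points to be (i) verifying, uniformly across the many cases of those theorems, that $x^{\frac{r+1}{2}}$ is really the lowest vanishing power of $x$ (so that $s$ cannot be smaller) and that the listed set of $r$-values is exhaustive, and (ii) justifying carefully that $\bar f$ respects the classifying maps to $B_G$, so that $\bar f^*(w)=g$. Since everything is done with $\mathbb{Q}$-coefficients and $H^*(\mathbb{CP}^k;\mathbb{Q})=\mathbb{Q}[g]/(g^{k+1})$, no integrality subtleties arise.
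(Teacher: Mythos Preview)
Your proposal is correct and follows essentially the same route as the paper: read off from the presentations in Theorems~\ref{thm 3.6}--\ref{thm 3.8} that the characteristic class $w=x$ satisfies $w^{\frac{r-1}{2}}\neq 0$ and $w^{\frac{r+1}{2}}=0$ for one of the listed values of $r$, and then invoke the bound $\mathrm{index}(X)\le s$. The only difference is that the paper simply cites \cite{floyd} for this last inequality, whereas you unpack the argument explicitly via the induced map $\bar f\colon\mathbb{CP}^k\to X/G$; one small slip is that you call this the ``co-index argument'' when it is the index bound (maps from spheres into $X$), but the content is correct.
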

	Recall that the Volovikov's index $i(X)$ is the smallest integer $r\geq 2$ such that $d_r: E_r^{k-r,r-1} \rightarrow E_r^{k,0}$ is nontrivial for some $k,$ in the Leray-Serre spectral sequence of the Borel fibration  $ X \stackrel{i} \hookrightarrow X_G \stackrel{\pi} \rightarrow B_G$ \cite{yu}. Again, 	by  Theorems proved in Section 3, we get $i(X)$ is one of the following: $ n+1, m+1,l+1,m-n+1,l-m+1,l-n+1$ or $l-m-n+1.$
	By taking $Y=\mathbb{S}^{k}$ in Theorem 1.1 \cite{co}, we have

\begin{theorem}
	Let $G=\mathbb{S}^1$ act freely on a  finitistic space $X \sim_\mathbb{Q} \mathbb{S}^n \times \mathbb{S}^m \times  \mathbb{S}^l, 1 \leq n\leq m \leq l .$ Then there is no $G$-equivariant map $f: X \rightarrow \mathbb{S}^{2k+1}$  if $2k+1< i(X)-1,k \geq 1,$ where $i(X)$ is one of the following: $ n+1, m+1,l+1,m-n+1,l-m+1,l-n+1$ or $l-m-n+1.$	
\end{theorem}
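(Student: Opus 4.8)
The plan is to first determine the Volovikov index $i(X)$ directly from the spectral sequence analysis already carried out in Section~3, and then to invoke Theorem~1.1 of \cite{co} with $Y=\mathbb{S}^{k}$.

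First I would identify $i(X)$. By definition, $i(X)$ is the least integer $r\ge 2$ for which some differential $d_{r}\colon E_{r}^{s-r,\,r-1}\to E_{r}^{s,0}$ is nonzero in the Leray--Serre spectral sequence of the Borel fibration $X\hookrightarrow X_{G}\to B_{G}$ (we use $s$ here to avoid clashing with the sphere index $k$). Since $G=\mathbb{S}^{1}$ acts freely on the finitistic space $X\sim_{\mathbb{Q}}\mathbb{S}^{n}\times\mathbb{S}^{m}\times\mathbb{S}^{l}$, the spectral sequence is nondegenerate, as observed at the beginning of Section~3, so not all differentials vanish. Because $E_{2}^{0,\ast}\cong H^{\ast}(X)=\mathbb{Q}[a,b,c]/\langle a^{2},b^{2},c^{2}\rangle$ is generated as an algebra over $E_{2}^{\ast,0}=H^{\ast}(B_{G})$ by $1\otimes a$, $1\otimes b$, $1\otimes c$, while each $d_{r}$ is a derivation vanishing on the bottom row $E_{r}^{\ast,0}$, the non-vanishing of the spectral sequence forces at least one of $d_{r_{1}}(1\otimes a)$, $d_{r_{2}}(1\otimes b)$, $d_{r_{3}}(1\otimes c)$ to be nonzero at some page. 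By the list of possibilities recorded just before Theorem~\ref{thm 3.6}, the stages $r_{1}$, $r_{2}$, $r_{3}$ at which these classes can first support a nonzero differential all belong to
\[
\{\,n+1,\ m+1,\ l+1,\ m-n+1,\ l-m+1,\ l-n+1,\ l-m-n+1\,\}.
\]
The smallest stage carrying a nonzero differential is $i(X)$, and the case distinctions exhausted in the proofs of Theorems~\ref{thm 3.6}, \ref{thm 3.7} and \ref{thm 3.8} show that this smallest stage is always one of the seven values above; hence $i(X)$ is one of them.

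With $i(X)$ so determined, the second step is a direct application of Theorem~1.1 of \cite{co} to the free $G=\mathbb{S}^{1}$-space $X$ with the choice $Y=\mathbb{S}^{k}$: that theorem yields the nonexistence of a $G$-equivariant map $X\to\mathbb{S}^{2k+1}$, where $\mathbb{S}^{2k+1}$ carries the standard free $\mathbb{S}^{1}$-action by componentwise multiplication, whenever $2k+1<i(X)-1$ and $k\ge 1$. Since $X$ is finitistic, $G$ acts freely, and $i(X)$ takes one of the values listed above, the hypotheses of \cite{co} are met and the theorem follows at once.

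I expect the only delicate point to be the middle step, namely checking that in \emph{every} admissible configuration of $(n,m,l)$ and of their parities the \emph{first} nonzero differential of the spectral sequence lands on one of the seven listed pages. This, however, is exactly what the case-by-case computations of Section~3 establish, so no new work is needed here; the remainder is bookkeeping with the inequality $2k+1<i(X)-1$ together with the cited Borsuk--Ulam theorem.
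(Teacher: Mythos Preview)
Your proposal is correct and follows essentially the same route as the paper: the paper's argument is nothing more than the two sentences preceding the theorem, namely that the Section~3 case analysis pins $i(X)$ down to one of the seven listed values, and then Theorem~1.1 of \cite{co} with $Y=\mathbb{S}^{k}$ gives the conclusion. Your write-up simply fleshes out these two steps with the derivation/nondegeneracy remarks, so there is no substantive difference in approach.
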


	\end{document}